\documentclass[11pt]{amsart}

\usepackage{amsmath}
\usepackage[font=small]{caption}
\usepackage{amsthm}
\usepackage{amsfonts}
\usepackage[pdftex]{graphicx}
\usepackage[margin=.5cm, format=hang]{subfig}
\usepackage[alphabetic]{amsrefs}
\usepackage{amssymb}
\usepackage{tikz-cd}
\usepackage{pinlabel}
\usepackage{morefloats}
\usepackage{enumitem}
\usepackage{fullpage}
\usepackage{array}
\usepackage{multirow}
\usepackage{sidecap}
\usepackage{hhline}
\usepackage{multirow}
\usepackage{blkarray}
\usepackage{xcolor}
\usepackage{comment}
\usepackage{subfig}

\usepackage{listings}

\usepackage{accents}
\usepackage{url}
\usepackage{hyperref}
\hypersetup{
    colorlinks=true,
    linkcolor= violet,
    citecolor= violet,      
    urlcolor= violet,
    pdftitle={A Dunfield--Gong $4$-Sphere is Standard},
    pdfpagemode=FullScreen
}

\newcommand{\Z}{\mathbb{Z}}

\newcommand{\vphi}{\varphi}

\newtheorem{theorem}{Theorem}[section]
\newtheorem{corollary}{Corollary}[theorem]
\newtheorem{lemma}[theorem]{Lemma}

\newtheorem{conjecture}{Conjecture}[section]

\theoremstyle{definition}
\newtheorem{definition}{Definition}[section]
\newtheorem{remark}{Remark}[section]
\newtheorem{question}{Question}[section]

\newtheoremstyle{named}{}{}{\itshape}{}{\bfseries}{.}{.5em}{\thmnote{#3}}
\theoremstyle{named}
\newtheorem*{namedtheorem}{Theorem}

\usepackage{graphicx} 
\graphicspath{{images/}}

\begin{document}
\title[A Dunfield--Gong $4$-Sphere is Standard]{A Dunfield--Gong $4$-Sphere is Standard} 
\author{Trevor Oliveira-Smith}
\address{\hskip-\parindent
Trevor Oliveira-Smith\\
Department of Mathematics \\
University of California, Davis\\
Davis, CA 95616, USA
}
\email{tdoliveirasmith@ucdavis.edu}
\date{\today}

\begin{abstract}
In this paper, we standardize a homotopy $4$-sphere constructed by Dunfield and Gong. As a corollary, we show that the $18$-crossing knot $18_{\text{nh}00000601}$, which is not known to be ribbon, is slice in the standard $4$-ball. Thus, $18_{\text{nh}00000601}$ serves as a potential counterexample to the Slice-Ribbon Conjecture. In addition, we show that the same knot bounds a fibered handle-ribbon disk in $B^{4}$.
\end{abstract}

\maketitle

\section{Introduction}\label{sec: intro}
Two long standing problems in modern low-dimensional topology are the \textit{Slice-Ribbon Conjecture} and the \textit{Smooth $4$-dimensional Poincar\'{e} Conjecture}. We describe both conjectures briefly, and how approaches to the first can relate to the second. \newline

A knot $K\subset S^{3}$ is \textit{slice} in the standard $4$-ball $B^{4}$ if it bounds a smooth properly embedded disk $D\subset B^{4}$ such that $\partial (B^{4},D) = (S^{3},K)$. The disk $D$ is called a \textit{slice disk} for $K$. A slice disk $D$ for $K$ is called \textit{ribbon} if $D$ can be isotoped (rel.\ boundary) so that the restriction of the height function on $B^{4} = B^{3}\times I\to I$ to $D$ has no index $2$ critical points. A knot $K$ in $S^{3}$ is called \textit{ribbon} if it bounds a ribbon disk in $B^{4}$. The Slice-Ribbon Conjecture, originally stated in \cite{Fox}, posits that every slice knot must be ribbon. There are two primary strategies one can use to determine whether or not a given knot is slice. The first strategy is to obstruct sliceness. There are classical invariants which can obstruct \textit{topological sliceness} (i.e.\ $K$ bounds a \textit{topological disk} in $B^{4}$). These classical invariants include the \textit{signature} of $K$, where $\sigma(K) = 0$ if $K$ is topologically slice. Another obstruction to topological sliceness comes from the Fox--Milnor condition of \cite{FoxMil}, which states that the Alexander polynomial of a topologically slice knot must be of the form $f(t)f(t^{-1})$ for some Laurent polynomial $f(t)$. Obstructions to smooth sliceness arise from knot Floer homology \cite{OzSz} and Khovanov homology \cite{Ras}. Conversely, one can show that a given knot $K$ in $S^{3}$ is slice either by explicitly constructing a ribbon disk for $K$ (by finding bands which surger $K$ to an unlink) or by constructing a specific $0$-trace embedding of $K$ into $S^{4}$ as in the classical Trace Embedding Lemma (see Lemma \ref{lem:traceemblem} for more details). \newline

By work of Freedman in \cite{Fr}, it is known that any homotopy $S^{4}$ is \textit{homeomorphic} to $S^{4}$. The Smooth $4$-dimensional Poincar\'{e} Conjecture posits that any smooth $4$-manifold $X$ which is homotopy-equivalent to $S^{4}$ must actually be \textit{diffeomorphic} to $S^{4}$. If there exists a homotopy $4$-sphere $X$ not diffeomorphic to $S^{4}$, such an $X$ would be called an \textit{exotic $S^{4}$}. Despite many examples of exotic $4$-manifolds \cite{Fr,Taubes,AP}, numerous attempts to construct an exotic $S^{4}$ (\cite{CS, Calegari,MP}) have so far been unsuccessful (\cite{Ak_2010,Gompf_2010,Iwaki2025,MeZ,cha2024, Nak}). One of the hurdles in trying to prove (or find a counterexample to) the Smooth Poincar\'{e} Conjecture is that many invariants known to distinguish smooth structures on $4$-manifolds vanish on homotopy $4$-spheres. A modern strategy posed by Manolescu and Piccirillo in \cite{MP} tries to circumvent this problem. We describe their strategy briefly.\newline

The Manolescu--Piccirillo strategy is to seek a pair of knots $K,K'\subset S^{3}$ with homeomorphic $0$-surgeries where $K'$ is slice through a smooth disk $D\subset B^{4}$, but $K$ is not slice. One could then take the disk complement $B^{4}\setminus\eta(D)$ and glue it to the $0$-trace of $K$. The result would be a homotopy $4$-sphere $X$ where $K$ is slice in $X\setminus\eta(B^{4})$ but not in the standard $4$-ball. This could only occur if $X$ is not diffeomorphic to $S^{4}$.  \newline

Recently in \cite{DG}, Dunfield and Gong used computational methods to find pairs of knots on which to apply the Manolescu--Piccirillo strategy. Dunfield and Gong constructed a pair of knots $K_{G}$ and $K_{B}$ sharing the same zero surgery, where $K_{B}$ is ribbon and the knot $K_{G}$ (known as $18_{\text{nh}00000601}$ in \cite{DG}) is not obviously slice. We depict these knots in Figure \ref{fig:K18andK31}.

\begin{figure}[h]
  \centering
  \subfloat[$K_{G}$]{\includegraphics[scale = 0.8]{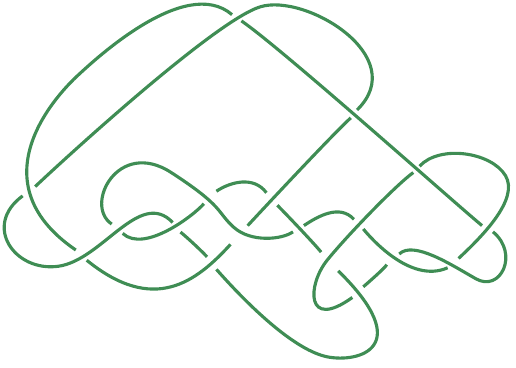}\label{fig:K18}}
  \hfill
  \subfloat[$K_{B}$]{\includegraphics[scale = 0.75]{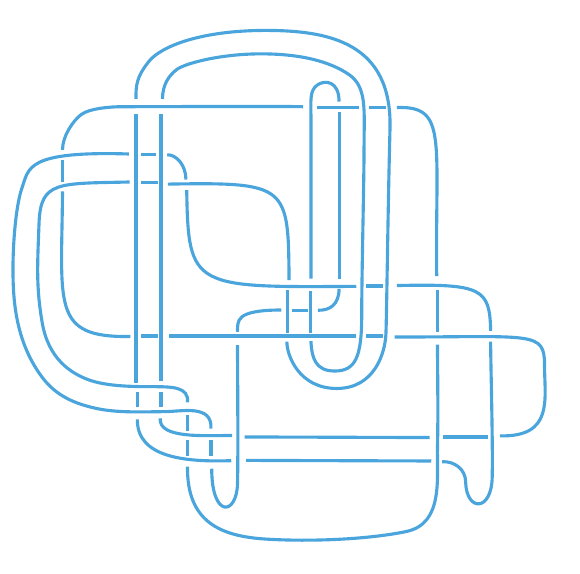}\label{fig:K31}}
  \caption{Two knots, $K_{G}$ and $K_{B}$, constructed by Dunfield and Gong. The knot $K_{G}$ is denoted $18_{\text{nh}00000601}$ by Dunfield and Gong.}
  \label{fig:K18andK31}
\end{figure}

Using SnapPy \cite{SnapPy} inside Sage \cite{sagemath}, Dunfield and Gong \cite{DGData,DG} were able to explicitly find a ribbon disk for $K_{B}$. This ribbon disk is given by banding together a two-component unlink, resulting in the knot $K_{B}$. We depict $K_{B}$ as a banded unlink in Figure \ref{fig:K31withband}. While SnapPy was able to find a ribbon disk for the larger knot $K_{B}$, it was unable to find any such collection of bands for the much smaller knot $K_{G}$; thus leaving the sliceness of $K_{G}$ unknown.

\begin{figure}[h]
\centering
\includegraphics[scale = 0.8]{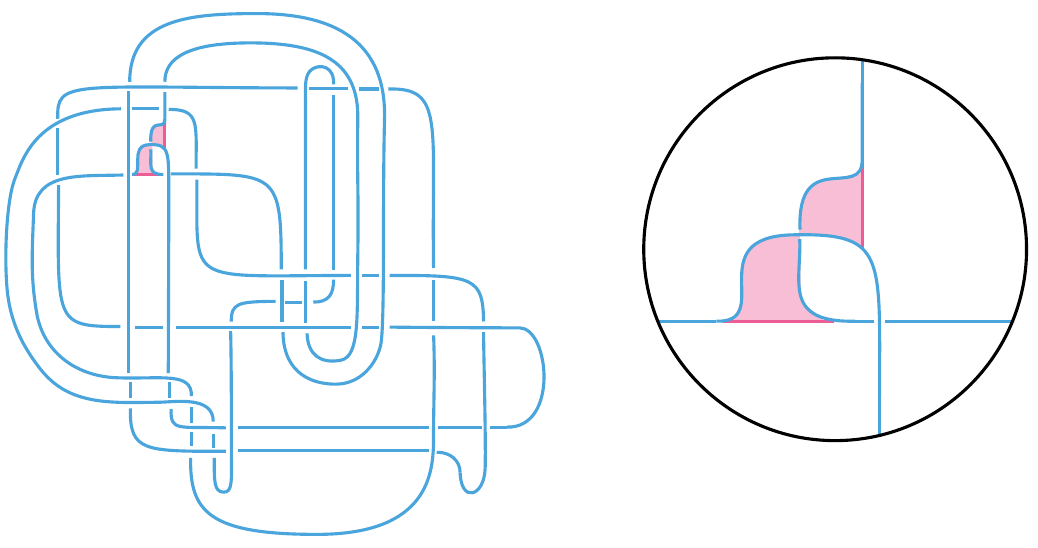}
\caption{(Left) $K_{B}$ pictured as a banded unlink with band specified in pink. (Right) We zoom in on our band used to express $K_{B}$ as a banded unlink.}
\label{fig:K31withband}
\end{figure}

Since $K_{B}$ and $K_{G}$ share the same $0$-surgery, Dunfield and Gong construct a homotopy $4$-sphere, $X_{DG}$, Manolescu and Piccirillo's strategy. Since $K_{G}$ is of unknown sliceness, $X_{DG}$ is a potential counterexample to the Smooth $4$-dimensional Poincar\'{e} Conjecture. Specifically, Theorem $5.14$ of \cite{DG} states that if $K_{G}$ is not smoohtly slice in the standard $4$-ball, then there exists an exotic $4$-sphere. However, as remarked in Section $1.17$ of \cite{DG} and Section $7.2$ of \cite{manolescu2026knotsfourmanifolds}, if $X_{DG}$ is diffeomorphic to the standard $S^{4}$ then $K_{G}$ is smoothly slice in the standard $4$-ball. Since a ribbon disk for $K_{G}$ has evaded detection, $K_{G}$ would be a potential counterexample to the Slice-Ribbon Conjecture. \newline

In this article, we standardize the homotopy $4$-sphere $X_{DG}$ constructed by Dunfield and Gong using Kirby calculus. As an immediate corollary, it follows that indeed $K_{G}$ is smoothly slice in the $4$-ball. Hence $K_{G}$ is a potential counterexample to the slice-ribbon conjecture.

Our main theorem is: 

\begin{theorem}\label{thm:mainthm}
The homotopy $4$-sphere $X_{DG} = E_{D_{K_{B}}}\cup_{\psi_{RBG}}\overline{X_{0}(K_{G})}$ is diffeomorphic to $S^4$.
\end{theorem}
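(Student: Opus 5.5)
We will prove Theorem \ref{thm:mainthm} by turning $X_{DG}$ into an explicit handle diagram and simplifying it with Kirby calculus until we reach the empty diagram. Throughout, we regard $X_{DG}$ as the union of the ribbon disk exterior $E_{D_{K_B}}$ and the reversed trace $\overline{X_{0}(K_{G})}$, glued along the shared $0$-surgery by the diffeomorphism $\psi_{RBG}$.

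\textbf{Step 1: a diagram for $E_{D_{K_B}}$.} From the banded unlink description of $K_B$ in Figure \ref{fig:K31withband}, we get a handle structure on the exterior of the ribbon disk. Each of the two unlink components contributes a dotted circle, i.e.\ a $1$-handle. The single band contributes a $0$-framed $2$-handle attached along a curve that runs through the band. Gluing $\overline{X_0(K_G)}$ then amounts to adding one more $2$-handle together with a $4$-handle. Dually, we attach $X_0(K_G)$ upside down: its $2$-handle is attached along the image, under $\psi_{RBG}$, of a meridian-type curve, namely the belt circle of the $0$-framed $2$-handle of $X_0(K_G)$, which becomes the dual knot in $S^3_0(K_G)=S^3_0(K_B)$.

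\textbf{Step 2: locate the new attaching curve.} The main work is to track this dual curve through the homeomorphism $\psi_{RBG}$ between the two $0$-surgeries. Dunfield--Gong produce the homeomorphism by computer, so we will instead try to realize it by an explicit Kirby-move sequence. The first idea is to present it via an RBG-link, as the subscript suggests: a three-component link $R\cup B\cup G$ in which blowing down or sliding in one order yields $0$-surgery on $K_B$, and in the other order yields $0$-surgery on $K_G$. Reading off where the meridian of $G$ (equivalently, the dual of $K_G$) lands after the moves that produce $K_B$ gives an explicit framed curve $\gamma$ in the complement of $K_B$ with framing $0$. The resulting diagram of $X_{DG}$ is then:
\begin{itemize}
\item two dotted circles;
\item the band $2$-handle;
\item the curve $\gamma$, drawn in the diagram for $E_{D_{K_B}}$ (using that $0$-surgery on $K_B$ is the boundary of $E_{D_{K_B}}$ with the ribbon surgery description);
\item a $4$-handle.
\end{itemize}

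\textbf{Step 3: cancel handles.} We now have two $1$-handles and two $2$-handles, and we aim to cancel them pairwise. Our first attempt will be to slide $\gamma$ over the band $2$-handle, with some isotopy and possibly handle slides of the $1$-handles (dotted circles over each other), until one $2$-handle passes geometrically once through one dotted circle, and then cancel that pair. After one cancellation the diagram has one dotted circle and one $2$-handle. This remaining $2$-handle is homologically dual to the dotted circle, so we then simplify until it also passes geometrically once and cancels. At that point only the $0$- and $4$-handles remain, which is $S^4$.

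\textbf{Expected obstacle.} The main obstacle is Step 3. A homotopy $4$-sphere built from a balanced presentation can fail to simplify without the introduction of extra cancelling $2$--$3$ handle pairs (an Andrews--Curtis type issue), and the curve $\gamma$ will be complicated. If direct slides stall, we will try the following in turn:
\begin{itemize}
\item add a cancelling $1$/$2$-handle pair, or a $2$/$3$-handle pair, to gain room;
\item simplify the group presentation read off from the diagram to find a candidate sequence of slides;
\item use the symmetry of the RBG-link to choose the cheaper order of moves.
\end{itemize}
Step 2 is also delicate: we must verify that the explicit moves really do realize the gluing map $\psi_{RBG}$ used by Dunfield--Gong, and not merely some homeomorphism between the two $0$-surgeries.
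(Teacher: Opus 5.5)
Your set-up is the same as the paper's: two dotted circles and a band $2$-handle for $E_{D_{K_B}}$, one more $2$-handle along the image of the dual of $K_G$, and a $4$-handle. However, Step 3 has a genuine gap exactly where the paper uses its key external input.

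\textbf{The final cancellation.} After the first cancellation you have $S^1\times B^3$, one $2$-handle and a $4$-handle. You then infer that, because the $2$-handle is homologically dual to the $1$-handle, you can ``simplify until it passes geometrically once.'' That inference is false in general. Mazur manifolds have exactly this handle structure: the attaching circle runs algebraically once but geometrically several times over the $1$-handle, and they are not $B^4$.

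What makes it work here is that the union of the $0$-, $1$- and $2$-handles has boundary $S^3$, since the $4$-handle is glued to it. So the attaching circle is a knot in $\partial(S^1\times B^3)=S^1\times S^2$ with a surgery yielding $S^3$. By Gabai's theorem (Property R) it is isotopic to $S^1\times\{pt\}$. Realizing that isotopy by slides over the dotted circle produces a cancelling pair, leaving $B^4$ capped by the $4$-handle, i.e.\ $S^4$. This is how the paper finishes. It also removes the Andrews--Curtis concern in your obstacle paragraph: once you are down to one $1$-handle and one $2$-handle, no search, extra cancelling pairs, or presentation analysis is needed. For the first cancellation, the paper slides the dual curve once over one of the dotted circles, not over the band $2$-handle. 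After that slide the curve is unlinked from that circle and forms a cancelling pair with the other.

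\textbf{The framing in Step 2.} In the Dunfield--Gong link, $G$ is a $0$-framed meridian of $R$. So once $B$ has been slid over $R$ to become $K_B$, the dual of $K_G$ in $S^3_0(K_B)$ is just $R$ itself with framing $r=1$, and nothing further needs tracking.

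You assert that $\gamma$ has framing $0$, and no redrawing can make that true. Since $\langle 1,0\rangle$-surgery on $R\cup K_B$ is $S^3$, we have $\mathrm{lk}(R,K_B)=\pm1$. Isotopy in $S^3_0(K_B)$ consists of isotopy off $K_B$ together with slides over the $0$-framed $K_B$, and these change the framing coefficient only by multiples of $2$. So the correct coefficient is always odd.

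With framing $0$ you would be attaching along the wrong framed curve. The boundary of the $0$-, $1$- and $2$-handles would then be a nontrivial $1/m$-surgery on $K_G$, which is not $S^3$ by Gordon--Luecke, so the $4$-handle could not be attached.
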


\begin{corollary}\label{cor:KGslice}
    $K_{G}$ is smoothly slice in $B^{4}$.
\end{corollary}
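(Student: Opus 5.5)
The statement to prove is Corollary \ref{cor:KGslice}, and the plan is to deduce it directly from Theorem \ref{thm:mainthm} using the Trace Embedding Lemma (Lemma \ref{lem:traceemblem}): a knot $K\subset S^{3}$ is smoothly slice in $B^{4}$ if and only if its $0$-trace $X_{0}(K)$ (or its mirror) embeds smoothly in $S^{4}$. So it suffices to exhibit a smooth embedding of $X_{0}(K_{G})$, or of $\overline{X_{0}(K_{G})}$, into $S^{4}$.

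By construction, $X_{DG}$ is obtained by gluing the complement $E_{D_{K_{B}}} = B^{4}\setminus \eta(D_{K_{B}})$ of the ribbon disk for $K_{B}$ to $\overline{X_{0}(K_{G})}$. The gluing map $\psi_{RBG}$ is a diffeomorphism between their boundaries, both of which are the common $0$-surgery $S^{3}_{0}(K_{B})\cong S^{3}_{0}(K_{G})$, up to orientation. Hence $\overline{X_{0}(K_{G})}$ sits as a smoothly embedded codimension-zero submanifold of $X_{DG}$.

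Theorem \ref{thm:mainthm} gives a diffeomorphism $\Phi\colon X_{DG}\to S^{4}$. Restricting $\Phi$ to $\overline{X_{0}(K_{G})}$ produces a smooth embedding $\overline{X_{0}(K_{G})}\hookrightarrow S^{4}$; reversing the orientation of $S^{4}$ turns this into an embedding $X_{0}(K_{G})\hookrightarrow \overline{S^{4}}\cong S^{4}$.

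Now apply the Trace Embedding Lemma. Remove a neighborhood of the core $2$-handle's cocore complement; concretely, the complement of $X_{0}(K_{G})$ in $S^{4}$ together with the $2$-handle core capped off yields a $4$-ball. Inside it, the core of the $2$-handle restricts to a slice disk for $K_{G}$ (or for its mirror $\overline{K_{G}}$, depending on orientation conventions). Since a knot is slice exactly when its mirror is, $K_{G}$ is smoothly slice in $B^{4}$.

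The only real obstacle is orientation bookkeeping: checking which of $K_{G}$ and its mirror the lemma produces. This is harmless because sliceness is invariant under mirroring. All of the genuine difficulty lies in Theorem \ref{thm:mainthm} itself, which I would prove by turning the decomposition into an explicit Kirby diagram. Concretely, I would draw the ribbon disk complement as a $1$-handle and a $2$-handle coming from the banded unlink, attach the dual handles of $X_{0}(K_{G})$ via $\psi_{RBG}$, and then cancel handles until the empty diagram remains. The hardest step there is tracking the attaching curve of the dual $2$-handle through the gluing diffeomorphism.
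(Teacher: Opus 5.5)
Your proposal is correct and is exactly the paper's argument: $\overline{X_{0}(K_{G})}$ sits inside $X_{DG}$, Theorem \ref{thm:mainthm} identifies $X_{DG}$ with $S^{4}$, and the Trace Embedding Lemma (Lemma \ref{lem:traceemblem}) then gives sliceness, with orientation issues absorbed by mirror-invariance of sliceness. Your sentence about the ``cocore complement'' is garbled---the clean version is that $S^{4}$ minus the $0$-handle of the embedded trace is a $4$-ball, and the core of the $2$-handle is a slice disk in it for $K_{G}$ or its mirror---but since you invoke the lemma directly, nothing depends on it.
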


As $K_{G}$ is smoothly slice, but not obviously ribbon, we are left with the following corollary. 

\begin{corollary}\label{cor:potentialcounterexample}
    $K_{G}$ is a potential counterexample to the Slice-Ribbon Conjecture.
\end{corollary}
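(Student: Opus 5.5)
The plan is to read the statement as the conjunction of two facts: $K_G$ is smoothly slice in $B^4$, and $K_G$ is not known to be ribbon. Together these mean that if $K_G$ were ever shown to be non-ribbon, it would be a counterexample to the Slice-Ribbon Conjecture. Since ``potential counterexample'' is an informal notion, the argument is not a deep one. It consists of assembling these two facts, each of which is either already established or documented earlier in the paper.

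For the first fact, I will simply invoke Corollary~\ref{cor:KGslice}. In turn, that corollary comes from Theorem~\ref{thm:mainthm} together with the remark recalled from Section~1.17 of \cite{DG}: $K_G$ is slice in $X_{DG}\setminus \eta(B^4)$ by construction of $X_{DG}$. Concretely, the cocore of the $2$-handle in $\overline{X_0(K_G)}$ together with a cone gives a slice disk, and this is the Trace Embedding Lemma. So once $X_{DG}\cong S^4$, removing a standard ball gives a standard $B^4$ in which $K_G$ bounds a smooth disk. Nothing new needs to be proved here.

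For the second fact, I will point to the evidence recorded in the introduction. Dunfield and Gong's computer search, run with SnapPy \cite{SnapPy} inside Sage \cite{sagemath}, found ribbon bands for the larger knot $K_B$, but it found no collection of bands surgering $K_G$ to an unlink. To my knowledge, no ribbon disk for $K_G$ appears anywhere in the literature. I would also note that the slice disk produced above is not obviously ribbon. It arises abstractly from a diffeomorphism $X_{DG}\cong S^4$ obtained by Kirby calculus, and that diffeomorphism need not respect any height function that would exhibit only index $0$ and $1$ critical points.

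Finally, I will remark that the standard sliceness obstructions cannot distinguish slice from ribbon for $K_G$, because $K_G$ is now known to be slice. These are the signature, the Fox--Milnor condition \cite{FoxMil}, and the knot Floer \cite{OzSz} and Khovanov \cite{Ras} concordance invariants. Hence the question ``is $K_G$ ribbon?'' is genuinely open, which is exactly the content of the corollary. The main obstacle is not in this deduction. It lies in Theorem~\ref{thm:mainthm}, and the present corollary is a direct consequence of it.
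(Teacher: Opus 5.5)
Your proposal is correct and matches the paper's reasoning: the corollary is Corollary~\ref{cor:KGslice} (sliceness via Theorem~\ref{thm:mainthm} and the Trace Embedding Lemma) combined with the fact that no ribbon disk for $K_G$ is known, since SnapPy's search for ribbon bands failed. One small imprecision in your recap: in the complement of the $4$-handle of $\overline{X_0(K_G)}$, the cocore of the dual $2$-handle is by itself the slice disk for $K_G$, whereas adjoining the full cone on $K_G$ closes it up into the trace $2$-sphere rather than giving a disk.
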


Using work of Casson--Gordon, Meier--Zupan, and Miller--Zupan \cite{CG,MeZ, MiZ}, we are able to show that $K_{G}$ bounds a fibered \textit{handle-ribbon} disk in the standard $4$-ball. 

\begin{theorem}\label{thm:K18slice}
The knot $K_{G}$ bounds a fibered handle-ribbon disk in $B^{4}$.
\end{theorem}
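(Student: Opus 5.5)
The plan is to exploit the structure of the Dunfield--Gong construction: $X_{DG}$ is obtained by gluing the exterior $E_{D_{K_B}}$ of the ribbon disk for $K_B$ to the $0$-trace $X_0(K_G)$ along the homeomorphism $\psi_{RBG}$ of $0$-surgeries. By Theorem \ref{thm:mainthm}, $X_{DG}\cong S^4$. Removing a small ball from the interior of $X_0(K_G)$ leaves a punctured $S^4$, i.e.\ $B^4$. By the Trace Embedding Lemma, the image under this diffeomorphism of the cocore of the $2$-handle of $X_0(K_G)$, together with the complementary region, is then a slice disk $D_G$ for $K_G$ in $B^4$. Its exterior is exactly $E_{D_{K_B}}$, reglued via $\psi_{RBG}$. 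So the exterior of $D_G$ is diffeomorphic to the exterior of the ribbon disk $D_{K_B}$.

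The first key step is \emph{fiberedness}. $K_B$ is presented as a band sum of a two-component unlink, so $D_{K_B}$ has a handle decomposition with one $0$-handle, one $1$-handle and one $2$-handle, relative to the boundary. I will verify that $K_B$ is fibered, which SnapPy or the Alexander polynomial together with a fibration check should confirm. I will then invoke Casson--Gordon \cite{CG}: for a fibered ribbon knot whose closed monodromy extends over a handlebody, the ribbon disk is fibered. More concretely, the exterior $E_{D_{K_B}}$ fibers over $S^1$ with handlebody fibers extending the fibration of $S^3_0(K_B)$. Because $\psi_{RBG}$ identifies $S^3_0(K_B)$ with $S^3_0(K_G)$, and $K_G$ is fibered (hence its $0$-surgery has a unique fibration class in the relevant cohomology class, up to isotopy, by Thurston norm considerations), I then check that $\psi_{RBG}$ carries the fibration of $S^3_0(K_B)$ to that of $S^3_0(K_G)$, possibly after isotopy. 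Gluing along it, the exterior of $D_G$ fibers compatibly with the fibration of the knot exterior of $K_G$. Hence $D_G$ is a fibered disk.

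The second step is \emph{handle-ribbonness}. A disk is handle-ribbon if its exterior admits a handle decomposition with no $3$-handles (and no $4$-handles) relative to the boundary. Since the exterior of $D_G$ is diffeomorphic to $E_{D_{K_B}}$, and the ribbon exterior of $D_{K_B}$ is built from $S^3_0$-side data by $0$-, $1$- and $2$-handles only, the absence of $3$-handles transfers directly. Alternatively, following Meier--Zupan and Miller--Zupan \cite{MeZ,MiZ}, a fibered disk whose monodromy extends over a handlebody is handle-ribbon.

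The main obstacle I expect is the compatibility of $\psi_{RBG}$ with the fibrations. It requires showing that $K_B$ is fibered with a Casson--Gordon handlebody-extension, and that the gluing map respects fiber surfaces. I would first try to argue abstractly: fibrations of $S^3_0(K)$ in the generator class are unique up to isotopy, and both fibrations arise by capping off Seifert fibers. If that fails, I would track the fiber surface explicitly through the Kirby moves defining $\psi_{RBG}$.
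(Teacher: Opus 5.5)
The gap is in your fiberedness step. Casson--Gordon do not prove that a ribbon (or homotopy ribbon) disk of a fibered knot is fibered. Theorem \ref{thm:fibhomrib} and Corollary \ref{cor:fibdisk} only say that the closed monodromy admits \emph{some} extension $\Phi$ over \emph{some} handlebody. Then $K$ bounds a fibered disk $D_\Phi$ in \emph{some} homotopy ball $V'$, built abstractly from $H\times_\Phi S^1$. They say nothing about a given disk such as $D_{K_B}$; if they did, Question \ref{q:fibdiskB4} would have an immediate affirmative answer. So your assertion that $E_{D_{K_B}}$ fibers over $S^1$ with handlebody fibers is unsupported, and it is the entire content of the fiberedness claim.

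By contrast, the compatibility issue you flag as the main obstacle is the easy part. By Gabai, $S^3_0(K_G)\cong S^3_0(K_B)$ is irreducible, and $K_B$ is fibered of the same genus as $K_G$. Fibrations in a fixed primitive class of $H^1$ are unique up to isotopy, and that isotopy can be absorbed into a collar.

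The rest of your architecture is fine and matches the paper in substance. Removing the $0$-handle of $X_0(K_G)$ from $X_{DG}\cong S^4$ leaves a standard $B^4$. In it, $K_G$ bounds the \emph{core} (not the cocore) of the $2$-handle, with exterior $E_{D_{K_B}}$ glued via $\psi_{RBG}$. Handle-ribbonness then follows from the absolute $0$/$1$/$2$-handle decomposition of the ribbon exterior. Note that the definition refers to this absolute decomposition; relative to the boundary the exterior has $2$-, $3$- and $4$-handles.

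The missing input for fiberedness is the R-link. Relative to $S^3_0(K_B)$, the exterior $E_{D_{K_B}}$ is a $0$-framed $2$-handle along a small unknot $U$ linking the band, plus two $3$-handles and a $4$-handle. So $K_B\cup U$ is a two-component R-link with a fibered component.

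The paper pulls this back along $\psi_{RBG}$ to the R-link $K_G\cup\psi_{RBG}(U)$ and applies Theorem \ref{thm:RlinkCGderiv}. Keeping $K_G$ fixed, the second component is stably equivalent to a CG-derivative $L^+$ in a fiber $\hat F$, and $\hat{\varphi}$ extends over the handlebody determined by $L^+$. Surface-framed $2$-handles along $L^+$, together with the uniquely attached $3$- and $4$-handles, then rebuild $H\times_\Phi S^1$.

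You could instead apply the same theorem directly to $K_B\cup U$ and then run your transfer argument. Either way, some input of this kind is needed; Casson--Gordon alone does not suffice.
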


\subsection{Organization} In Section \ref{sec:standardDGsphere}, we set the conventions of the paper; review RBG links and the Manolescu--Piccirillo strategy; and finally,  we construct a Kirby diagram for the Dunfield--Gong sphere and prove Theorem \ref{thm:mainthm}, our main theorem. In Section \ref{sec:KBslicedisk} we give the necessary preliminary material on R-links, fibered homotopy-knots, and derivative links. We finish the section by proving Theorem \ref{thm:K18slice}.\newline

\textbf{Acknowledgments.}
The author would like to give a special thank you to Nathan Dunfield for the extensive help with SnapPy. The author would also like thank Laura Starkston and Melissa Zhang for their comments on early drafts of this work, and Rob Kirby for taking interest.\ Finally, the author would like to thank his advisor, Abby Thompson, for her helpful feedback, constant support, and encouragement. The author has been supported as a GAANN Fellow through the Mathematics at UC Davis GAANN grant funded by the Department of Education grant number P200A240025.

\section{Standardizing the Dunfield--Gong Sphere}\label{sec:standardDGsphere}
We begin this section by reviewing notations and convention, in \ref{subsec:convnot}. In \ref{subsec:RBGlinkreview}, we review RBG links and the Manolescu--Piccirillo strategy. Finally, in \ref{subsec:Kdiagramandmainthm} we draw a Kirby diagram of the Dunfield--Gong $4$-sphere $X_{DG}$ and prove our main theorem, Theorem \ref{thm:mainthm}.

\subsection{Conventions and notation}\label{subsec:convnot}
Unless otherwise stated, all manifolds are smooth, compact, and oriented.\ $S^{n}$ denotes the standard smooth $n$-sphere and $B^{n}$ denotes the standard smooth $n$-ball.\ $I$ is the unit interval $[0,1]$. If $Y$ is a submanifold of $X$, we let $\eta(Y)$ denote an open regular neighborhood of $Y$ inside $X$. Given a slice disk $D$ of $K$ in $B^{4}$, we denote the \textit{exterior} of $D$ by $E_{D} = B^{4}\setminus\eta(D)$.\ Given a handle decomposition of a smooth $n$-manifold $X$, we will let $\overline{X}$ denote the result of turning the handle decomposition on $X$ upside-down. If $X$ is a manifold with boundary with a given handle decomposition, then $\overline{X}$ will correspond to a relative handle decomposition (see Section $5.5$ of \cite{GompfStip} for more details). We say that a closed $3$-manifold $Y$ is obtained by \textit{Dehn surgery} on a knot $K\subset S^{3}$ with slope $a/b$ if $Y$ can be expressed as the union of $S^{3}\setminus\eta(K)$ and a solid torus $V$ such that a meridian of $V$ goes to the $a/b$ curve on $\partial(S^{3}\setminus\eta(K))$ in preferred coordinates. In this case, we denote $Y$ by $S_{a/b}^{3}(K)$. We can adopt a similar convention for any $n$-component link $L\subset S^{3}$ where the slope $a/b$ is replaced by an $n$-tuple of boundary slopes. Related to surgery is the notion of a \textit{knot trace}. Given an integer surgery on a knot $K\subset S^{3}$, the \textit{$n$-trace of $K$}, $X_{n}(K)$, is defined as the compact $4$-manifold obtained by attaching an $n$-framed $4$-dimensional $2$-handle to $B^{4}$ along $K$. When $n=0$, we just call $X_{0}(K)$ the \textit{knot trace} of $K$. We note that $\partial(X_{n}(K))$ is $S_{n}^{3}(K)$. To distinguish between a Dehn surgery and a $4$-dimensional $2$-handle attachment, we will use bracketed coefficients to denote a surgery on a link and unbracketed coefficients to denote a handle attachment along a link. We will use the word \textit{handlebody} to mean a copy of the $3$-manifold $\natural^{n} S^{1}\times B^{2}$ for some $n$; otherwise we will specify a dimension. Given a framed link $L\subset S^{3}$ with at least two-components $L_{1}$ and $L_{2}$ with framings $n_{1}$ and $n_{2}$, a \textit{handleslide} of $L_{1}$ over $L_{2}$ is the process by which the link $L$ is replaced with a link $L' = L\setminus L_{1}\cup L_{1}'$, where $L_{1}'$ is a framed knot obtained by connecting $L_{1}$ to $L_{2}$ with a band. The framing on the component $L_{1}'$ is given by the formula $n_{1}+n_{2}\pm2\text{lk}(L_{1},L_{2})$, where $\text{lk}(L_{1},L_{2})$ denotes the algebraic linking number between $L_{1}$ and $L_{2}$. Treating a surgery on a link as a framed $4$-dimensional $2$-handle attachment, a handleslide of one component over another does not change the diffeomorphism type of the resulting $4$-manifold (See Section $5$ of \cite{GompfStip} for details). In all figures, we let a boxed integer $k$ indicate a full twist on $n$-strands; if $k>0$, then we use a positive full twist; if $k<0$, then we use a negative full twist. We also let a boxed $\pm \frac{1}{2}$ indicate a positive (resp. negative) half-twist on $n$-strands. Lastly, we diagrammatically represent a $4$-dimensional $1$-handle attachment in $B^{4}$ as a dotted circle as in \cite{Akbulut_1977}. 

\subsection{The Manolescu--Piccirillo strategy}\label{subsec:RBGlinkreview}
In order to obtain a diagram of $X_{DG}$, we must first review the general RBG construction of \cite{MP} by recalling the definition of an RBG link. 

\begin{definition}\label{def:RBGlink}
    An \textit{RBG link} $R\cup B\cup G\subset S^{3}$ is a three-component rationally framed link, with respective framings $r,b,$ and $g$ such that $H_{1}(S^{3}_{r,b,g}(R\cup B\cup G)) = \Z$, which comes equipped with homeomorphisms $\psi_{B}:S_{r,g}^{3}(R\cup G)\to S^{3}$ and $\psi_{G}:S_{r,g}^{3}(R\cup B)\to S^{3}$.
\end{definition}

\begin{remark}\label{rmk:constructlink}
Theorem $1.2$ of \cite{MP} shows that RBG links can be used to describe homeomorphisms of $3$-manifolds arising as $0$-surgeries on knots $K_{B}$ and $K_{G}$, where $K_{B}$ is the image of $B$ under the homeomorphism $\psi_{B}$ and $K_{G}$ is the image of $G$ under the homeomorphism $\psi_{G}$. In practice, $K_{B}$ (resp. $K_{G}$) is obtained by sliding $B$ (resp. $G$) over $R$ to unlink $B$ (resp. $G$) from $G$ (resp. $B$) in the link $R\cup G$ (resp. $R\cup B$). The same theorem of \cite{MP} shows that given a homeomorphism $\psi: S_{0}^{3}(K)\to S_{0}^{3}(K')$, one can construct an RBG link with $K_{B} = K'$ and $K_{G} = K$. The process is as follows. Given a homeomorphism $\psi: S_{0}^{3}(K)\to S_{0}^{3}(K')$, take the $0$-framed meridian $(\mu_{K'},0)$ of $K$. Now, let $(R,r)$ be the framed knot $\psi^{-1}(\mu_{K'},0)$ resulting from pulling the meridian of $K'$ back along our $0$-surgery homeomorphism. This gives a picture of the dual to $K'$ inside $S_{0}^{3}(K)$. Next, define the framed link $R\cup K\cup \mu_{R}\subset S^{3}$ with respective framings $r,0$ and $0$. The homeomorphism $\psi_{B}$ is given by slam-dunking $R\cup \mu_{R}$, and $\psi_{G}$ is given by pushing $R$ and $\mu_{R}$ through $\psi$, followed by a slam-dunk. 
\end{remark}

In \cite{MP}, Manolescu and Piccirillo propose the following strategy to construct exotic $S^4$'s utilizing $0$-surgery homeomorphisms: \newline

Consider pairs of knots $K$ and $K'$ with a homeomorphism $\psi$ of their $0$-surgeries. If $K$ is slice in $B^{4}$ and $K'$ is not, take a slice disk $D_{K}$ for $K$. Then remove a regular neighborhood of $D_{K}$ from $B^{4}$ to obtain $E_{D_{K}} = B^{4}\setminus\eta(D_{K})$. The boundary of this slice disk complement is precisely $S_{0}^{3}(K)$. Since $K$ and $K'$ have homeomorphic $0$-surgeries, one can then form the manifold $X_{\psi}$ by turning the $0$-trace of $K'$ upside-down and gluing it to $E_{D_{K}}$ along the $0$-surgery homeomorphism $\psi$, i.e.\ 
$$X_{\psi} = E_{D_{K}}\cup_{\psi} \overline{X_{0}(K')}.$$
\noindent One can check that $X_{\psi}$ is a homotopy $4$-sphere. By construction, $K'$ is smoothly slice in $X\setminus\eta(\{pt.\})$, but not in the standard $4$-ball. Hence, $X$ is an exotic $S^{4}$.
\subsection{A Kirby diagram of $X_{DG}$}\label{subsec:Kdiagramandmainthm}
 In \cite{DG}, Dunfield and Gong used computational methods to find the RBG link $R\cup B\cup G$ pictured in Figure \ref{fig:DGRBGlink}.

\begin{figure}[h]
    \centering
    \includegraphics[scale = 0.7]{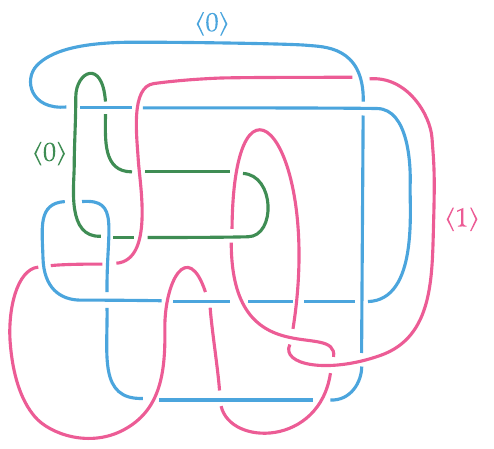}
    \caption{Dunfield and Gong's RBG link $R\cup B\cup G$ as pictured in Figure $19$ of \cite{DG} with respective framings $r = 1, b = 0$, and  $g = 0$.}
    \label{fig:DGRBGlink}
\end{figure}

Let $R\cup B\cup G$ be the RBG link given in Figure \ref{fig:DGRBGlink}. There are two associated knots $K_{B}$ and $K_{G}$, and the link $R\cup B\cup G$ defines a homeomorphism $\psi_{RBG}$ of their $0$-surgeries. The knot $K_{B}$ pictured in Figure \ref{fig:K31} is obtained from the link $R\cup B\cup G$ in Figure \ref{fig:DGRBGlink} by sliding $B$ over $R$ and canceling $R\cup G$ from the diagram. The knot $K_{G}$, pictured in Figure \ref{fig:K18}, is obtained from $R\cup B\cup G$ by sliding $G$ over $R$ and canceling $R\cup B$ from the diagram. In \cite{DG,DGData}, using SnapPy \cite{SnapPy} inside Sage \cite{sagemath}, the authors were able to find a ribbon band for $K_{B}$. This band when dualized can be used to express $K_{B}$ as a banded two-component unlink as in Figure \ref{fig:K31withband}. Using this banded unlink description of $K_{B}$, we are able to describe a ribbon disk, $D_{K_{B}}$ of $K_{B}$. Using $D_{K_{B}}$, Dunfield and Gong apply Manolescu and Picirrillo's strategy to obtain a homotopy $S^{4}$, $$X_{DG} = E_{D_{K_{B}}}\cup_{\psi_{RBG}}\overline{X_{0}(K_{G})},$$
\noindent where $E_{D_{K_{B}}} = B^{4}\setminus \eta(D_{K_{B}})$ is the ribbon-disk exterior of $D_{K_{B}}$, and $\overline{X_{0}(K_{G}})$ is the (upside-down) $0$-trace of $K_{G}$. The two manifolds are then glued along their common boundary via the homeomorphism $\psi_{RBG}$ associated to $R\cup B\cup G$. In this section, we show that $X_{DG}$ is standard.

\begin{namedtheorem}[Theorem \ref{thm:mainthm}]
The homotopy $4$-sphere $X_{DG} = E_{D_{K_{B}}}\cup_{\psi_{RBG}}\overline{X_{0}(K_{G})}$ is diffeomorphic to $S^4$.
\end{namedtheorem}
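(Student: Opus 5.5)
We will prove the theorem by writing down an explicit Kirby diagram for $X_{DG}$ and simplifying it by handle moves until it is empty, so that $X_{DG}\cong S^{4}$. The diagram comes from the standard recipe for gluing a ribbon-disk exterior to an upside-down trace along an RBG homeomorphism.

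First, we describe $E_{D_{K_{B}}}$ from the banded unlink of Figure \ref{fig:K31withband}. The disk $D_{K_{B}}$ has two minima and one saddle. Its exterior therefore has a handle decomposition with two $1$-handles, drawn as dotted circles along the two unlink components, and one $2$-handle. The $2$-handle is attached along a small meridional curve of the band, namely the dual of the band, with framing $0$ relative to the dotted circles.

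Second, we glue on $\overline{X_{0}(K_{G})}$ along $\psi_{RBG}$. Turned upside down, $\overline{X_{0}(K_{G})}$ is a single $2$-handle plus a $4$-handle. The new $2$-handle is attached along the image of the belt circle of the trace's $2$-handle, i.e.\ the $0$-framed meridian $\mu_{K_{G}}$, pushed into $\partial E_{D_{K_{B}}}\cong S^{3}_{0}(K_{B})$ via $\psi_{RBG}$.

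To locate this curve concretely, we use the RBG link of Figure \ref{fig:DGRBGlink}, following Remark \ref{rmk:constructlink}. In $S^{3}_{r,b,g}(R\cup B\cup G)$ the meridian of $G$ is carried under $\psi_{G}$ to $\mu_{K_{G}}$. We therefore draw a $0$-framed meridian $\mu_G$ of $G$ in the RBG picture and track it through the slide of $B$ over $R$ and the cancellation of $R\cup G$ that produces $K_{B}$. This gives a framed curve $C$ in the complement of $K_{B}$, with the framing computed from the handleslide formula of Subsection \ref{subsec:convnot}. We then isotope $C$ in $S^{3}\setminus K_{B}$ into the banded-unlink picture. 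The result is $X_{DG}$ as two dotted circles, two $2$-handles (the band dual and $C$), and a $4$-handle. Since $X_{DG}$ is a homotopy sphere, it suffices to show that this $2$-complex part is a diagram of $B^{4}$: an integral homology ball with $\partial = S^3$ whose $4$-handle caps off uniquely.

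Third, we perform Kirby calculus. The band dual is a meridian of the band, so it meets the dotted circles geometrically in a simple way. We would try first to cancel one $1$-handle against it directly, replacing the dotted circle by its effect on the other curves, in effect undoing one minimum of the ribbon disk. That leaves a single dotted circle and the $2$-handle $C$, with $C$ running algebraically once over the remaining $1$-handle. The main obstacle is making $C$ run \emph{geometrically} once over that $1$-handle. This is a disguised Andrews--Curtis-type problem, and $C$ will likely be very complicated after being pushed through the RBG moves. To attack it, we would slide $C$ over the band-dual $2$-handle before the first cancellation and perform isotopies, possibly computer-assisted via SnapPy, to find slides that reduce the geometric intersection with the dotted circle. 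If this succeeds, we cancel the remaining $1$/$2$ pair and are left with the empty diagram plus a $4$-handle, which is $S^{4}$.

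Corollary \ref{cor:KGslice} then follows, since $K_G$ is slice in $X_{DG}\setminus \eta(\{pt.\})\cong B^{4}$.
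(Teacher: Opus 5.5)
Your diagram for $X_{DG}$ matches the paper's (Lemma~\ref{lem:obtainKdiagram}): two dotted circles from the minima, the $0$-framed band dual, one more $2$-handle, and a $4$-handle. Your curve $C$ is also not complicated. Tracking the $0$-framed meridian of $G$ through the cancellation of the $\langle 1,0\rangle$ Hopf pair $R\cup G$ just returns the red curve $R$ itself with framing $+1$.

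The gap is in your third step, in two places. First, the band dual cannot be cancelled ``directly'' against either dotted circle. The $0$-handle, the two dotted circles and the band dual already constitute $E_{D_{K_B}}$. If the band dual met the belt sphere of one $1$-handle geometrically once, you would get $E_{D_{K_B}}\cong S^1\times B^3$. Then $S^3_0(K_B)\cong S^1\times S^2$, and $K_B$ would be the unknot by Gabai's Property R. The local picture after the ribbon move, in which the band dual links each dotted circle once, is misleading. Geometric intersection with a belt sphere must be measured against mutually disjoint spanning disks for the dotted circles, and the argument above shows these must meet the band dual more than once. So any reduction to a single $1$-handle has to use $C$, not the band dual; ``undoing one minimum of the ribbon disk'' is impossible for a nontrivial knot.

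Second, the step you flag as the main obstacle is left open (``if this succeeds''), so as written this is a plan rather than a proof. Ironically, that step needs no work. Once you have one $1$-handle and one $2$-handle whose union has boundary $S^3$, the attaching circle is a knot in $S^1\times S^2$ with an $S^3$ surgery. Its dual knot in $S^3$ has an $S^1\times S^2$ surgery, so it is the unknot by Gabai. Hence the attaching circle has solid-torus exterior and is isotopic to $S^1\times\{pt\}$, which gives a cancelling pair with no Andrews--Curtis-type search.

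The real, diagram-specific work is the reduction from two $1$-handles to one, and the paper does it with $R$ rather than the band dual. A single slide of $R$ under the blue $1$-handle, along an untwisted band, leaves $R$ unlinked from the blue circle and running geometrically once through the black one. So $R$ cancels the black $1$-handle. Gabai's theorem then disposes of the remaining $0$-, $1$-, $2$-handle part.
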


To standardize Dunfield and Gong's sphere, we need the following technical lemma to give us a Kirby diagram of $X_{DG}$. 

\begin{lemma}\label{lem:obtainKdiagram}
$X_{DG}$ is given by the Kirby diagram on the right side of Figure \ref{fig:carveddiskdiagram}. 
\end{lemma}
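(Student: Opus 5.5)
We build a Kirby diagram for each piece of $X_{DG}$ and then glue the pieces using the RBG description of $\psi_{RBG}$.

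\emph{The disk exterior.} Figure \ref{fig:K31withband} presents $K_{B}$ as a two-component unlink joined by one band, which gives a ribbon disk $D_{K_{B}}$ with two minima and one saddle. A handle decomposition of $E_{D_{K_{B}}}$ then comes from the standard carving procedure (Section $6.2$ of \cite{GompfStip}, following \cite{Akbulut_1977}). Each minimum becomes a dotted circle, so the two unknotted components give two $1$-handles. The band becomes a $0$-framed $2$-handle: it runs along the band core, passes through the two dotted circles the band joins, and is closed up by a small arc. Its boundary is $S^{3}_{0}(K_{B})$, and we should record the identification of this boundary with $0$-surgery on $K_{B}$ explicitly, because the gluing is defined in terms of it.

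\emph{The upside-down trace and the gluing.} Relative to its boundary $S^{3}_{0}(K_{G})$, the manifold $\overline{X_{0}(K_{G})}$ is a single $2$-handle followed by a $4$-handle. The $2$-handle is attached along the dual knot, namely the $0$-framed meridian $\mu_{K_{G}}$, with framing $0$. So $X_{DG}$ is obtained from $E_{D_{K_{B}}}$ by attaching one extra $2$-handle along $\psi_{RBG}^{-1}(\mu_{K_{G}})\subset S^{3}_{0}(K_{B})$, with framing the pulled-back $0$-framing, and then a $4$-handle. The task is to locate this curve in the carved diagram.

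By Remark \ref{rmk:constructlink}, the diffeomorphism is
\[
S^{3}_{0}(K_{B}) \;\cong\; S^{3}_{r,b,g}(R\cup B\cup G) \;\cong\; S^{3}_{0}(K_{G}).
\]
Both maps are built from handleslides over $R$ and slam-dunks/cancellations of $R$ with the other component. Under the second identification, $\mu_{K_{G}}$ corresponds to a $0$-framed meridian of $G$ in the RBG diagram. We then trace that meridian backwards through the slide of $B$ over $R$ and the cancellation of $R\cup G$, keeping track of framings, to get a framed curve in $S^{3}_{0}(K_{B})$. Drawing it in Figure \ref{fig:K31} and isotoping it into the carved diagram gives the right side of Figure \ref{fig:carveddiskdiagram}. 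Cleaner still would be to keep $R$ and $G$ in the picture: carve the disk inside the diagram of $R\cup B\cup G$ after sliding $B$ over $R$, and add the $0$-framed meridian of $G$ as the new $2$-handle. Lemma \ref{lem:obtainKdiagram} is then a statement about a concrete diagram.

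\emph{Main obstacle.} The hard step is carrying the meridian of $G$ faithfully through the slam-dunk and cancellation. The $1$-framed $R$ must cancel against $G$, and the image of $\mu_{G}$ can pick up twisting and linking along the way. Framing bookkeeping is essential here: the $\pm 2\,\mathrm{lk}$ corrections in each slide, and the $n\mapsto n-1/m$ behaviour of a slam-dunk. To keep this manageable, we would do the bookkeeping with blackboard framings. We would then confirm the final diagram with SnapPy, checking that its boundary after adding the $2$-handle is $S^{3}$ and that $\pi_1$ is trivial, as a sanity check on the identifications.
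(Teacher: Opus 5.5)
Your skeleton agrees with the paper's. $X_{DG}$ is $E_{D_{K_B}}$ plus one $2$-handle along $\psi_{RBG}^{-1}(\mu_{K_G})$ plus a $4$-handle. Carving the banded unlink into two dotted circles and a band $2$-handle is equivalent to the paper's dotting of $K_B$ followed by a ribbon move.

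The gap is that the lemma \emph{is} the identification of that attaching curve, and you never make it. The curve is the red $R$ with framing $1$ in the picture $K_B\cup R$ obtained by sliding $B$ over $R$. You instead defer it to a bookkeeping computation that you expect to pick up twisting and linking. The missing observation is that it picks up nothing:
\begin{itemize}
\item After the slides, $G$ is a $0$-framed meridian of $R$ whose spanning disk meets only $R$.
\item In the surgered manifold, the meridian $\mu_G$ of the integrally framed $G$ is isotopic to the core of the $G$-surgery torus.
\item $G$ is itself isotopic to the core of the $R$-surgery torus, and $0$-surgery on $G$ refills the exterior of $R$ along $\mu_R$. So the core of the $G$-surgery torus is isotopic to $R$ in $S^3_0(K_B)$.
\item Tracking slopes shows the $0$-framing of $\mu_G$ becomes the $+1$-framing of $R$. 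Equivalently, slam-dunking $\mu_G$ into $G$ gives $G$ coefficient $0-1/0=\infty$ and leaves $R$ alone.
\end{itemize}
The paper phrases this dually. $1$-surgery on $R\subset S^3_0(K_B)$ returns $S^3$, and its dual knot is the $0$-framed meridian $G$, whose image is $K_G$. So $R$ is the surgery dual of $K_G$.

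Two related problems. First, your SnapPy check cannot certify the gluing. The $0$-framed meridian of $K_B$ also gives boundary $S^3$ and trivial $\pi_1$ (it just rebuilds $B^4$), yet it has nothing to do with $\psi_{RBG}$. Second, in your ``cleaner'' variant, $R$ and $G$ must stay bracketed surgery curves. If they were genuine $2$-handles, sliding $R$ over $\mu_G$ would split off the $0$-framed Hopf pair $G\cup\mu_G$, and the diagram would describe $X_{DG}\#(S^2\times S^2)$. Deleting $R\cup G$ from that picture is exactly the identification above. So as written, the proposal stops short of the diagram in the figure.
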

\begin{proof}
    Since $X_{DG}$ is the union of a $0$-trace and the ribbon-disk exterior, we can treat the gluing as occurring in steps. First attach a dual $2$-handle (corresponding to the $0$-framed $2$-handle attached along $K_{G}$) to $E_{D_{K_{B}}}$ along the image of the $0$-framed meridian of $K_{G}$ under $\psi_{RBG}$. Then, cap off with the remaining $4$-ball. This is accomplished by taking the link $R\cup B\cup G$ depicted in Figure \ref{fig:DGRBGlink}, and handlesliding $B$ over $R$ until it appears as $K_B$ as depicted in Figure \ref{fig:KBinRBG}. \newline

\begin{figure}[h]
    \centering
    \includegraphics[scale = 0.7]{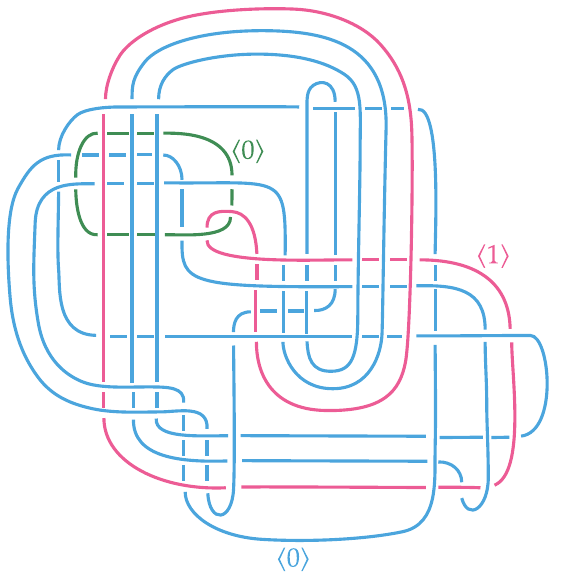}
    \caption{The link which depicts $K_{B}$ sitting inside the RBG-link after handlesliding $B$ over $R$.}
    \label{fig:KBinRBG}
\end{figure}

Since $R\cup B\cup G$ is an RBG link, performing $\langle 1,0\rangle$-surgery on $R\cup B$ must yield $S^{3}$. Thus, by handlesliding $B$ over $R$, the sublink $K_{B}\cup R$ must also yield $S^{3}$. Performing this surgery in steps, we first surger $K_{B}$ to land in $S_{0}^{3}(K_{B})$. Then, treating $R$ as a knot inside $S_{0}^{3}(K_{B})$, surgering $R$ lands us back in $S^{3}$. By taking the core of the surgery solid torus, there is a dual surgery curve $J$ to $R$ in $S^3$ with a surgery to $S_{0}^{3}(K_{B})$. Since $R$ is integer framed, the dual knot $J$ may be represented as a $0$-framed meridian to $R$. However, in the link $K_{B}\cup R\cup G$, we see that $G$ is drawn as a $0$-framed meridian to $R$. Thus, we can take $J$ to be the image of $G$ after performing our $\langle 1,0\rangle$-surgery on $R\cup K_{B}$. However, since $R\cup K_{B}$ is handleslide equivalent to $R\cup B$, this has the same effect on $G$ as the homeomorphism $\psi_{G}$ in our RBG link. Hence $J$ is isotopic to $K_{B}$ in $S^{3}$. Thus, the surgery dual to $K_{G}$ in $S_{0}^{3}(K_{B})$ must be $R$ with $1$-framing. Hence, we can take the image of the $0$-framed meridian to $K_{G}$ under the associated RBG homeomorphism to be $R$ with $1$-framing.

Now by keeping track of the band we surgered to obtain $D_{K_{G}}$, we can obtain a Kirby diagram for $X_{DG}$ from the link $K_{B}\cup R$ as follows: Dot $K_{B}$ as in \cite{AK} (or Section $6.2$ \cite{GompfStip}), attach a $1$-framed $2$-handle along $R$, and finish by adding a $4$-handle. The result is depicted on the left side of Figure \ref{fig:carveddiskdiagram}. By performing a ribbon move on the dotted component corresponding to our twisted band (see Section $1.4$ of \cite{akb} or Section $6.2$ of \cite{GompfStip} for details), we obtain the diagram on the right side of Figure \ref{fig:carveddiskdiagram}. This gives us a Kirby diagram of $X_{DG}$.

\begin{figure}[h]
    \centering
    \includegraphics[scale = 0.7]{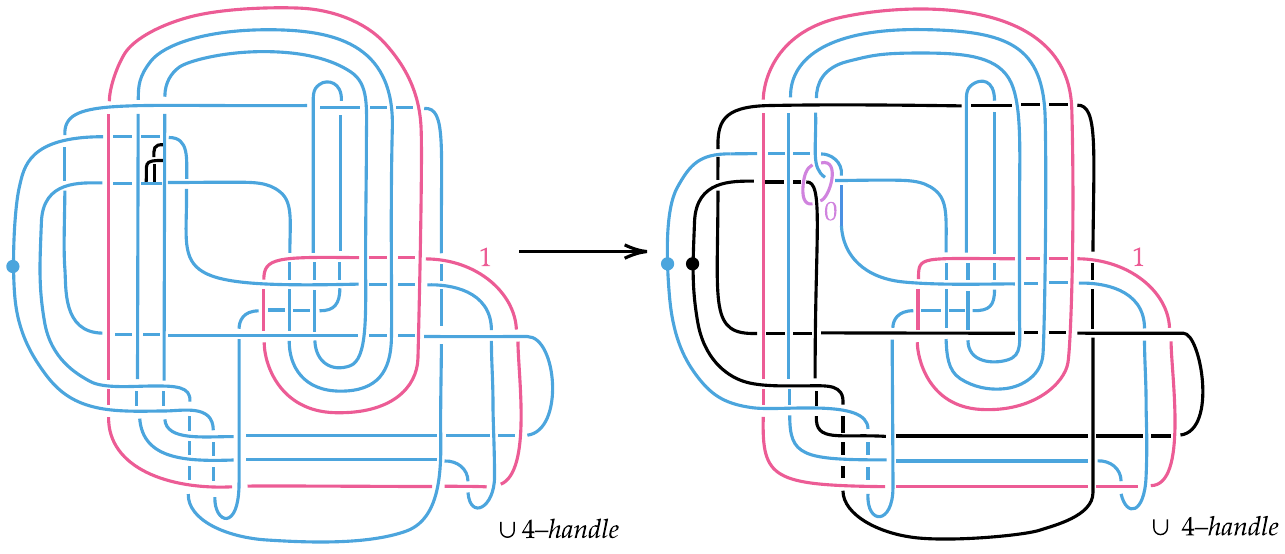}
    \caption{(Left) A Kirby diagram of $X_{DG}$ obtained from the link $K_{B}\cup R$ by dotting $K_{B}$, adding a $1$-framed $2$-handle to $R$, then capping off with a $4$-ball. (Right) A Kirby diagram for $X_{DG}$ obtained by performing a ribbon move on the dotted component in the figure on the left.}
    \label{fig:carveddiskdiagram}
\end{figure}


\end{proof}

 With a Kirby diagram of $X_{DG}$, we can now prove Theorem \ref{thm:mainthm}.

\begin{proof}[Proof of Theorem \ref{thm:mainthm}]
We proceed using standard Kirby calculus. From Lemma \ref{lem:obtainKdiagram}, we represent $X_{DG}$ by the Kirby diagram on the right side of Figure \ref{fig:carveddiskdiagram}. Taking our red $1$-framed component $R$, we slide $R$ under the blue $1$-handle using the untwisted band specified by the orange arrow on the left side of Figure \ref{fig:homotopy4ballwithslide}. The result of this slide is given by surgering along the specified band, depicted on the right side of Figure \ref{fig:homotopy4ballwithslide}.

\begin{figure}[h]
    \centering
    \includegraphics[scale = 0.7]{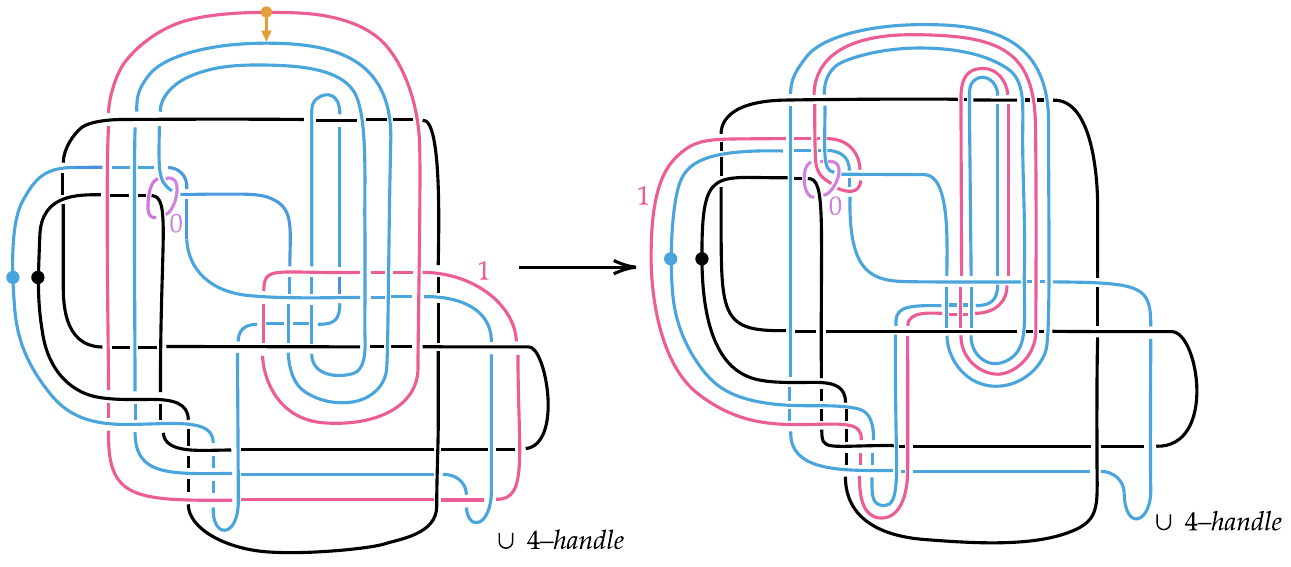}
    \caption{(Left) We slide our red $1$-framed component under the blue $1$-handle using an untwisted band specified by the orange arrow. (Right) The result of the previous handleslide yields a presentation of $X_{DG}$ with a canceling $(1,2)$-Hopf pair.}
    \label{fig:homotopy4ballwithslide}
\end{figure}

In a slight abuse of notation, we rename the red component resulting from our slide $R$. One can check that $R$ is unlinked with the blue $1$-handle and forms a canceling $(1,2)$-Hopf pair with the black $1$-handle. Canceling this Hopf pair from the diagram, we conclude that $X_{DG}$ can be built from a single $0$-handle, $1$-handle, $2$-handle, and $4$-handle. To simplify handle calculus, we appeal to Gabai's resolution of the Property R conjecture in \cite{Gab}. We take $B'$ to be the union of the $0,1$, and $2$-handles in the diagram on the right side of Figure \ref{fig:homotopy4ballwithslide}. Since we are able to cap off $B'$ with a $4$-handle to obtain a homotopy $S^{4}$, $B'$ must be a homotopy $4$-ball built from a single $0$-handle, $1$-handle, and $2$-handle. The boundary of $B'$ is described as $\langle 0,n\rangle$-surgery on a two-component link where the $0$-framed component is taken to be the unknot. By performing surgery on the unknotted component first, we can take the second component of our link as a knot in $S^1\times S^2$. By surgering the second component, we land back in $S^3$. From Gabai's proof of the Property R conjecture \cite{Gab}, we can isotope this second component in $S^1\times S^2$ to appear as a copy of $S^1\times\{pt.\}$. This isotopy can be recognized diagrammatically by sliding the attaching sphere of our $2$-handle under our dotted $1$-handle in a manner similar to Lemma $2.4$ of \cite{MiZ}. The result of these slides is a canceling $(1,2)$-Hopf pair. Canceling this pair from the diagram, we obtain $B^4$.
\end{proof}

To see that $K_{G}$ is slice in the standard $4$-ball, we recall the Trace Embedding Lemma (for a proof, we refer the reader to Lemma $1.3$ of \cite{Picc2020}).

\begin{lemma}[Trace Embedding Lemma]\label{lem:traceemblem}
A knot $K\subset S^3$ is smoothly slice in $B^{4}$ if and only if $X_{0}(K)$ (equivalently $\overline{X_{0}(K)}$) smoothly embeds into $S^{4}$.
\end{lemma}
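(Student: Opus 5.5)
We prove the two directions separately. In both, the $0$-trace is regarded as $B^4$ with a $0$-framed $2$-handle attached along $K$. Turning the handle decomposition upside down gives a manifold diffeomorphic to the original with the opposite orientation. Since $S^4$ admits an orientation-reversing diffeomorphism, $X_{0}(K)$ embeds in $S^4$ exactly when $\overline{X_{0}(K)}$ does, so it is enough to treat $X_{0}(K)$.

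\emph{Slice implies embedding.} Suppose $D\subset B^4$ is a smooth slice disk for $K$. Write $S^4 = B^4_-\cup_{S^3} B^4_+$, with $K\subset S^3$ the common boundary, and place $D$ in $B^4_+$.
\begin{itemize}
\item The closed regular neighborhood $\nu(D)\subset B^4_+$ is a copy of $D^2\times D^2$ meeting $S^3$ in a tubular neighborhood $\nu(K)$.
\item Because $D$ is a disk, its normal bundle is trivial. The induced framing on $K$ is the Seifert framing, i.e.\ $0$, since a push-off of $K$ along $D$ bounds a parallel copy of $D$ disjoint from $D$, so its linking number with $K$ is zero.
\item Hence $B^4_-\cup\nu(D)$ is $B^4$ with a $2$-handle attached along $K$ with framing $0$, which is $X_{0}(K)$. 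After smoothing corners, this is a smooth embedding $X_{0}(K)\hookrightarrow S^4$.
\end{itemize}

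\emph{Embedding implies slice.} Suppose $X_{0}(K)\hookrightarrow S^4$ is a smooth embedding.
\begin{itemize}
\item Take the cocore-union-core sphere: the core of the $2$-handle together with a cone on $K$ in the $0$-handle. This gives an embedded $2$-sphere $S\subset X_{0}(K)$. It is smooth away from a single cone point $p$, which lies in the interior of the $0$-handle $B^4$.
\item Choose a small standard ball $B_p$ around $p$ inside the $0$-handle, so that $S$ meets $\partial B_p$ in a copy of $K$.
\item Remove $B_p$. The remaining piece of $S$ is a smooth disk $D$ in $S^4\setminus \mathrm{int}(B_p)$ with boundary $K\subset\partial B_p$.
\item The complement $S^4\setminus\mathrm{int}(B_p)$ is diffeomorphic to the standard $B^4$, by uniqueness of smooth balls up to isotopy (disk theorem of Palais/Cerf). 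The identification can be chosen to respect $\partial B_p=S^3$ carrying $K$.
\end{itemize}
Thus $K$ is smoothly slice in $B^4$.

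The main obstacle is bookkeeping rather than any deep input. One must check that the framing induced by a slice disk is $0$, which is what makes the first direction produce the $0$-trace rather than some other trace. One must also make sure the identification in the second direction carries $(\partial B_p, S\cap\partial B_p)$ to $(S^3,K)$ without reversing orientation, or else note that slicing $K$ and its mirror reverse are handled symmetrically. Both are routine.
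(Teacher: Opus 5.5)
Your proof is correct. It is the standard argument the paper defers to (Lemma~1.3 of Piccirillo): a tubular neighborhood of the slice disk supplies the $0$-framed $2$-handle, and conversely you delete a small ball about the cone point of the cone-on-$K$-union-core sphere and use the Palais--Cerf disk theorem to see that the complement is a standard $B^4$, up to the mirror/orientation bookkeeping you already flagged. The only blemish is terminological: the sphere in your second direction is the cone-union-core sphere, not the ``cocore-union-core'' sphere.
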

This immediately implies:

\begin{namedtheorem}[Corollary \ref{cor:KGslice}]
The knot $K_{G}$ bounds a smoothly slice disk $D_{K_{G}}$ in the standard $4$-ball.
\end{namedtheorem}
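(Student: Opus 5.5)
The corollary follows by feeding Theorem \ref{thm:mainthm} into the Trace Embedding Lemma (Lemma \ref{lem:traceemblem}). The plan is to exhibit a smooth embedding of $\overline{X_{0}(K_{G})}$, equivalently of $X_{0}(K_{G})$, into the standard $S^{4}$. The lemma then produces the slice disk $D_{K_{G}}$ in $B^{4}$.

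First, recall the construction of the Dunfield--Gong sphere:
$$X_{DG} = E_{D_{K_{B}}}\cup_{\psi_{RBG}}\overline{X_{0}(K_{G})}.$$
By construction, $\overline{X_{0}(K_{G})}$ sits inside $X_{DG}$ as a smooth codimension-zero submanifold. Here $\psi_{RBG}$ is a homeomorphism of $0$-surgeries. In our setting, however, the gluing is realized smoothly, as in Lemma \ref{lem:obtainKdiagram}: the dual $2$-handle is attached to $E_{D_{K_{B}}}$ along $R$ with framing $1$, and the result is then capped off by a $4$-handle. So the union is a smooth manifold, and the inclusion of the upside-down trace is a smooth embedding.

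Second, apply Theorem \ref{thm:mainthm}. It gives a diffeomorphism $\Phi\colon X_{DG}\to S^{4}$. Composing the inclusion with $\Phi$ yields a smooth embedding $\overline{X_{0}(K_{G})}\hookrightarrow S^{4}$. Since $\overline{X_{0}(K_{G})}$ is the same manifold as $X_{0}(K_{G})$ with its handle decomposition turned upside-down, this is also a smooth embedding of $X_{0}(K_{G})$ into $S^{4}$.

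Third, invoke Lemma \ref{lem:traceemblem}. Concretely, the slice disk is built as follows. Inside the embedded trace, take the cone on $K_{G}$ in the $0$-handle together with the core of the $2$-handle; their union is a smoothly embedded $2$-sphere $S$. Its complement $S^{4}\setminus \eta(S)$ contains a $4$-ball away from $S$; removing that ball and restricting to the core disk inside the trace exhibits $K_{G}$ as the boundary of a smooth disk in a copy of $B^{4}$.

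The only step needing any care is the first: confirming that $\overline{X_{0}(K_{G})}$ genuinely smoothly embeds in $X_{DG}$, as opposed to merely being glued along a homeomorphism. I expect this to be straightforward. The Kirby-diagram description in Lemma \ref{lem:obtainKdiagram} exhibits $X_{DG}$ as a smooth handlebody. In that handlebody, the $1$-framed $2$-handle along $R$ together with the $4$-handle is exactly the upside-down $0$-trace of $K_{G}$.
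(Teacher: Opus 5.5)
Your main argument is correct and is exactly the paper's: the upside-down trace $\overline{X_{0}(K_{G})}$ (the $1$-framed $2$-handle along $R$ plus the $4$-handle) sits smoothly in $X_{DG}$, Theorem \ref{thm:mainthm} identifies $X_{DG}$ with $S^{4}$, and Lemma \ref{lem:traceemblem} finishes. Your optional ``concrete'' unpacking of the lemma is wrong as written, though nothing depends on it since you cite the lemma: the cone on $K_{G}$ union the core is not a smooth sphere (it is singular at the cone point, because $K_{G}$ is knotted), and the $4$-ball you delete must contain the cone point, not be disjoint from $S$ --- most simply delete the $0$-handle itself, whose complement in $S^{4}$ is a standard $B^{4}$ by the Cerf--Palais disk theorem and contains the core of the $2$-handle as a slice disk for $K_{G}$ (up to a harmless mirror), whereas deleting a ball disjoint from $S$ just leaves a closed singular sphere rather than a disk bounded by $K_{G}$.
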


As $K_{G}$ is not known to be ribbon, 

\begin{namedtheorem}[Corollary \ref{cor:potentialcounterexample}]
    $K_{G}$ is a potential counterexample to the Slice-Ribbon Conjecture.
\end{namedtheorem}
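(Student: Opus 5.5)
The last statement is Corollary \ref{cor:potentialcounterexample}: $K_{G}$ is a potential counterexample to the Slice-Ribbon Conjecture. The plan is to unpack what ``potential counterexample'' means and check each part against results already in the paper. Two things are needed: (i) $K_{G}$ is smoothly slice in the standard $B^{4}$, and (ii) no ribbon disk for $K_{G}$ is currently known, so ribbonness is unresolved.

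For (i), combine Theorem \ref{thm:mainthm} with the Trace Embedding Lemma (Lemma \ref{lem:traceemblem}).
\begin{enumerate}
\item By construction, $X_{DG} = E_{D_{K_{B}}}\cup_{\psi_{RBG}}\overline{X_{0}(K_{G})}$ contains $\overline{X_{0}(K_{G})}$ as a smooth codimension-zero submanifold.
\item Theorem \ref{thm:mainthm} gives a diffeomorphism $X_{DG}\cong S^{4}$. Composing the inclusion with it yields a smooth embedding $\overline{X_{0}(K_{G})}\hookrightarrow S^{4}$.
\item Lemma \ref{lem:traceemblem} allows either orientation of the trace, so this embedding implies $K_{G}$ bounds a smooth slice disk in $B^{4}$. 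This is exactly Corollary \ref{cor:KGslice}.
\end{enumerate}
One can also see the disk directly. The cocore of the $2$-handle in $\overline{X_{0}(K_{G})}$, together with a cone over $K_{G}$ in the upside-down $0$-handle, gives a sphere in $X_{DG}$. Deleting a ball around the cone point leaves a disk in $X_{DG}\setminus\eta(pt)\cong B^{4}$ with boundary $K_{G}$.

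For (ii), I would only cite what is documented. Dunfield and Gong's SnapPy search \cite{DG,DGData} found ribbon bands for $K_{B}$ but none for $K_{G}$. The standard computable obstructions to sliceness must all vanish, since $K_{G}$ is in fact slice. So ribbonness of $K_{G}$ remains undetermined, and if $K_{G}$ were shown non-ribbon it would disprove the conjecture.

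The main obstacle is (ii), which is not a mathematical theorem but a statement about current knowledge. The proof should therefore be phrased carefully: it claims sliceness rigorously and non-ribbonness only as open. The only genuinely mathematical step is the orientation bookkeeping in (i), namely that $\overline{X_{0}(K_{G})}$, the trace turned upside-down and glued in, still embeds as a trace whose $2$-handle is attached along $K_{G}$. Lemma \ref{lem:traceemblem} already accounts for this, since it is stated for either orientation.
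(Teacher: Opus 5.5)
Your proposal is correct and matches the paper's argument. Sliceness of $K_{G}$ comes from Theorem \ref{thm:mainthm} together with the Trace Embedding Lemma (this is Corollary \ref{cor:KGslice}), and the ``potential counterexample'' status rests only on the fact that no ribbon disk for $K_{G}$ is known, since Dunfield--Gong's SnapPy search found ribbon bands for $K_{B}$ but not for $K_{G}$.
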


In the next section, we further analyze the slice disk $D_{K_{G}}$ obtained from Corllary \ref{cor:KGslice}.

\section{$K_{B}$ bounds a fibered handle-ribbon disk}\label{sec:KBslicedisk}
In this section we show that Dunfield and Gong's knot $K_{G}$ bounds a fibered handle-ribbon disk in $B^{4}$. We begin by recalling the following definition due to Casson and Gordon in \cite{CG}.

\subsection{Fibered, homotopy ribbon knots}
\begin{definition}\label{def:homrib}
Let $V$ be a homotopy $4$-ball, and let $K\subset S^{3} = \partial V$ be a knot. We say $K$ is \textit{homotopy ribbon} in $V$ if there is a smooth properly embedded disk $D$ in $V$ such that $\partial D = K$ and the induced map $i_{*}: \pi_{1}(S^{3}\setminus K)\to \pi_{1}(V\setminus D)$ is surjective. We call such a disk a \textit{homotopy ribbon disk in} $V$.
\end{definition}

In addition to homotopy ribbonness, we also have the notion of \textit{handle-ribbonness} as in \cite{MiZ} (also known as \textit{strong homotopy ribbonness} in other literature \cite{MilZemp,HKP}).

\begin{definition}\label{def:hanrib}
Let $V$ be a homotopy $4$-ball and let $K\subset S^{3} = \partial V$ be a knot. We say $K$ is \textit{handle-ribbon in} $V$ if there is a smooth properly embedded disk $D$ in $V$ such that $\partial D = K$ and $V\setminus\eta(D)$ admits a handle decomposition without any $4$-dimensional $3$-handles. We call such a disk $D$ a \textit{handle-ribbon disk} in $V$. 
\end{definition}

The notions of homotopy ribbonness and handle-ribbonness serve as alternative notions of ribbonness in the setting of more general homotopy $4$-balls, where a canonical Morse function is not obvious. However, when we take our ambient homotopy $4$-ball to be the standard $B^{4}$, we can stratify these notions of sliceness for knots into the following set containments:
$$\{\text{ribbon knots}\}\subset\{\text{handle-ribbon knots}\}\subset\{\text{homotopy ribbon knots}\}\subset\{\text{slice knots}\}.$$
The Slice-Ribbon Conjecture implies that all the above containments are equalities. 

The next notion of importance to us is \textit{fiberedness}. If $X$ is a compact connected smooth manifold, and $\vphi:X\to X$ is some diffeomorphism of $X$, we can form the \textit{mapping torus} $$X\times_{\vphi}S^{1} = (X\times I)/\sim,$$
where $I = [0,1]$ and $\sim$ is an equivalence relation defined by $(x,0)\sim (\vphi(x),1)$ for all $x\in X$. The map $\vphi$ is called the \textit{monodromy} of the mapping torus and a copy of $X\times\{\theta\}$ for $\theta\in I$ is called a \textit{fiber} of the mapping torus. A fibered knot is as follows:

\begin{definition}\label{def:fib}
A knot $K\subset S^{3}$ is \textit{fibered} if $S^{3}\setminus\eta(K)$ admits a mapping torus structure $F\times_{\vphi} S^{1}$, where $F$ is a minimal genus Seifert surface for $K$, and $\vphi:F\to F$ is a diffeomorphism of $F$ which restricts to the identity on $\partial F = K$. The map $\vphi$ is called the \textit{monodromy} of $K$. For $\theta\in I$, a copy of $F\times\{\theta\}\subset S^{3}\setminus\eta(K)$ is called a \textit{fiber} of $K$.
\end{definition}

Performing $0$-surgery on a fibered knot $K$ in $S^3$ yields a closed fibered $3$-manifold, $S_{0}^{3}(K)$, with closed fiber $\hat{F} = F\cup D^{2}$ obtained by capping off each fiber of $K$ with a meridinal disk of the surgery solid torus. The monodromy of $S_{0}^{3}(K)$ can be taken to be the map $\hat{\vphi}:\hat{F}\to\hat{F}$ defined by $$\hat{\vphi} = \begin{cases}
\vphi,\ x\in F\\
\text{id},\ x\in D^{2}\ .
\end{cases}$$
\noindent We call $\hat{\vphi}$ the \textit{closed monodromy of} $K$. 

We introduce the notion of a \textit{monodromy extension over handlebodies} as in Section $5$ of \cite{CG}:

\begin{definition}\label{monoext}
  Let $K\subset S^{3}$ be a fibered knot with monodromy $\vphi$, $0$-surgery $S_{0}^{3}(K)$, closed fiber $\hat{F}$, and closed monodromy $\hat{\vphi}$. We say that $\hat{\vphi}$ \textit{extends over a handlebody} if there is an abstract handlebody $H$ such that $\partial H = \hat{F}$, and a diffeomorphism $\Phi: H\to H$ such that $\Phi|_{\partial H} = \hat{\vphi}$. 
\end{definition}

In addition to fiberedness of knots, we can similarly introduce the notion of a fibered disk: 

\begin{definition}\label{def:fibdisk}
Let $D\subset V$ be a smooth properly embedded disk in a homotopy $4$-ball $V$. We say $D$ is \textit{fibered} if $V\setminus\eta(D)$ admits the structure of a mapping torus $H\times_{\Phi}S^1$ for some compact $3$-manifold $H$ and some diffeomorphism $\Phi:H\to H$ such that $\Phi|_{\partial \eta(D)}$ acts as projection onto the second factor. 
\end{definition}

Casson and Gordon characterized fibered homotopy ribbon knots in terms of monodromy extensions over handlebodies. We recount this theorem:

\begin{theorem}[Theorem $5.1$ \cite{CG}]\label{thm:fibhomrib}
A fibered knot $K\subset S^{3}$ is homotopy ribbon in \textit{some} homotopy $4$-ball if and only if the closed monodromy of $K$ extends over handlebodies.
\end{theorem}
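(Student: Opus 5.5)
We prove the two directions separately. Throughout, $K$ is fibered with fiber $F$ of genus $g$ and closed monodromy $\hat{\vphi}:\hat F\to\hat F$, so that $S^3_0(K)=\hat F\times_{\hat\vphi}S^1$.

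\emph{($\Leftarrow$) Constructing the ball.} Suppose $\Phi:H\to H$ extends $\hat\vphi$ over a genus $g$ handlebody $H$. Form the mapping torus $E=H\times_{\Phi}S^1$; its boundary is $\hat F\times_{\hat\vphi}S^1\cong S^3_0(K)$.
\begin{enumerate}
\item Attach a $4$-dimensional $2$-handle to $E$ along the section $\{p\}\times S^1$, where $p$ lies in the capping disk $D^2\subset\hat F$. This circle is the core of the surgery solid torus, i.e.\ the dual knot to $K$. With the framing induced from the $D^2\times S^1$ factor, the handle attachment undoes the $0$-surgery, so the new boundary is $S^3$ and the cocore $D$ of the handle is a disk bounded by $K$. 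Call the result $V$.
\item \emph{$V$ is a homotopy $4$-ball.} For $\pi_1$: since $\pi_1(\hat F)\to\pi_1(H)$ is surjective, $\pi_1(\partial E)\to\pi_1(E)$ is surjective. Adding the $2$-handle kills the class of the section, which is the image of the meridian $\mu_K$. Hence $\pi_1(V)$ is a quotient of $\pi_1(S^3_0(K))/\langle\langle\mu_K\rangle\rangle=\pi_1(S^3)=1$. For homology: $E\simeq(H\times_\Phi S^1)$ has $H_1=\Z\oplus\operatorname{coker}(\Phi_*-1)$ on $H_1(H)=\Z^g$, and $H_2=\ker(\Phi_*-1)$. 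Since $\Phi_*$ on $H_1(H)$ is a quotient of $\hat\vphi_*$ on $H_1(\hat F)$, its characteristic polynomial divides $\Delta_K$, and $\Delta_K(1)=\pm1$ forces $\Phi_*-1$ to be invertible. So $E$ is a homology circle, and $V$ is a simply connected homology ball, hence a homotopy $4$-ball.
\item \emph{$D$ is homotopy ribbon.} The exterior $V\setminus\eta(D)$ is $E$, and the surjectivity of $\pi_1(S^3\setminus K)\to\pi_1(E)$ follows from the same surjectivity of the boundary map.
\end{enumerate}

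\emph{($\Rightarrow$) Extracting the handlebody.} Let $D\subset V$ be homotopy ribbon and $E=V\setminus\eta(D)$, so $\partial E=S^3_0(K)$ and $\pi_1(\partial E)\to\pi_1(E)$ is onto.
\begin{enumerate}
\item Pass to infinite cyclic covers. The cover of $\partial E$ is $\hat F\times\mathbb{R}$, which includes into $\tilde E$ with $\pi_1(\hat F)\to\pi_1(\tilde E)$ surjective, since the $\Z$-direction is accounted for by the meridian.
\item Show that $\pi_1(\tilde E)$ is free of rank $g$, using duality and the fact that $E$ is a homology circle with $\Z[t^{\pm1}]$-torsion Alexander module. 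This yields a surjection $\rho:\pi_1(\hat F)\to F_g$.
\item The kernel $N=\ker\rho$ is invariant under $\hat\vphi_*$, because deck translation acts on $\tilde E$ and restricts to $\hat\vphi$ on the fiber.
\item Apply Jaco's theorem (via the loop theorem): a surjection of a genus $g$ surface group onto $F_g$ is induced by a handlebody $H$ with $\partial H=\hat F$, so that $N=\ker(\pi_1\hat F\to\pi_1H)$.
\item A homeomorphism of $\partial H$ preserving this kernel extends over $H$. Use Dehn's lemma to send meridian disks to meridian disks, then extend over the remaining $3$-ball. This gives $\Phi$.
\end{enumerate}

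\emph{Main obstacle.} I expect the hardest step to be step 2 of ($\Rightarrow$): showing $\pi_1(\tilde E)$ is free of rank exactly $g$. I would try a Stallings-type argument. First, $\tilde E$ has finitely generated homology with $H_2(\tilde E)=0$ by a Milnor-sequence and duality computation. Second, surjectivity from a surface group bounds the rank by $g$. Combining these with the Euler characteristic and a Poincaré–Lefschetz duality argument for the pair $(\tilde E,\hat F)$ should force freeness of rank $g$.
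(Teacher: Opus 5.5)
The paper gives no proof of this statement; it quotes it from Casson--Gordon, so the relevant comparison is with their argument.

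Your ($\Leftarrow$) direction is correct and is the standard one. You take $H\times_{\Phi}S^1$, attach a $2$-handle along the dual knot, and use the cocore as the disk. Your Wang-sequence computation, using $\Delta_K(1)=\pm1$, correctly shows that the result is a homotopy $4$-ball and that the disk is homotopy ribbon. Your ($\Rightarrow$) outline also has Casson--Gordon's overall shape: the infinite cyclic cover of the disk exterior, the surjection $\pi_1(\hat F)\to\pi_1(\tilde E)$, $\hat\vphi_*$-invariance of the kernel, and extension over $H$ via Dehn's lemma. The genuine gap is step 2, and the route you propose for it cannot work.

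Everything in your ``main obstacle'' paragraph is homological: the Milnor sequence, untwisted Poincar\'e--Lefschetz duality for $(\tilde E,\hat F)$, the Euler characteristic, and rank counting. At best these give $H_1(\tilde E)\cong\Z^g$ and $H_2(\tilde E)=0$, hence $H_2(\pi_1\tilde E)=0$ by Hopf's theorem. By Stallings' theorem, a map $F_g\to\pi_1(\tilde E)$ that is an isomorphism on $H_1$ is then an isomorphism on every lower central series quotient. That is exactly the parafree property, and non-free parafree groups exist (Baumslag). So this information cannot force $\pi_1(\tilde E)$ to be free, and step 4 (Jaco's theorem) never receives its hypothesis.

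What is missing is a geometric input. Casson and Gordon show that $(\tilde E,\hat F)$ behaves as a $3$-dimensional duality pair and prove a loop theorem for such pairs. This produces an essential simple closed curve on $\hat F$ that is null-homotopic in $\tilde E$. Compressing $\hat F$ along such curves inductively identifies $\ker(\pi_1\hat F\to\pi_1\tilde E)$ with the kernel of a handlebody, which also makes the detour through freeness and Jaco's theorem unnecessary. Without this loop-theorem step, or an equivalent argument that genuinely works at the level of $\pi_1$, the ($\Rightarrow$) direction is not established.
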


The Casson--Gordon theorem also gives the following corollary:

\begin{corollary}[Corollary $5.4$ \cite{CG}]\label{cor:fibdisk}
Suppose $K$ is a fibered knot in $S^3$ with closed monodromy $\vphi$. If $K$ is homotopy ribbon in some homotopy $4$-ball $V$, then there is an extension $\Phi$ of $\hat{\vphi}$, a homotopy $4$-ball $V'$, and a fibered homotopy ribbon disk $D_{\Phi}$ in $V'$ such that $V'\setminus\eta(D_{\Phi})$ fibers over handlebodies with monodromy $\Phi$.
\end{corollary}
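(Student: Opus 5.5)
The plan is to build $V'$ and $D_{\Phi}$ directly from an extension of the closed monodromy. Casson--Gordon's statement asks for no relation between $V'$ and $V$. So the hypothesis on $V$ will be used only once: through Theorem \ref{thm:fibhomrib}, to produce the extension.

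\textbf{Step 1: the extension and the mapping torus.} Since $K$ is fibered and homotopy ribbon in $V$, Theorem \ref{thm:fibhomrib} gives a handlebody $H$ with $\partial H=\hat F$ and a diffeomorphism $\Phi:H\to H$ with $\Phi|_{\partial H}=\hat\vphi$. Set $M=H\times_{\Phi}S^{1}$. Its boundary is $\hat F\times_{\hat\vphi}S^{1}=S^{3}_{0}(K)$. Because $\hat\vphi$ is the identity on the capping disk $D^{2}\subset\hat F$, this boundary contains the solid torus $D^{2}\times S^{1}$, which is the surgery solid torus. Its complement in $\partial M$ is $S^{3}\setminus\eta(K)$.

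\textbf{Step 2: define $V'$ and the disk.} Glue a copy of $D^{2}\times D^{2}$ to $M$ by identifying $D^{2}\times\partial D^{2}$ with the solid torus $D^{2}\times S^{1}\subset\partial M$. Call the result $V'$. The new boundary is $(S^{3}\setminus\eta(K))\cup(\partial D^{2}\times D^{2})$. Here the curves $\partial D^{2}\times\{pt\}$ are glued to the meridians of $K$, so this Dehn filling undoes the $0$-surgery and $\partial V'\cong S^{3}$, with $K$ as the core knot. Take $D_{\Phi}=\{0\}\times D^{2}$. Then $\partial D_{\Phi}=K$, and $V'\setminus\eta(D_{\Phi})$ deformation retracts to $M=H\times_{\Phi}S^{1}$. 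This exhibits $D_{\Phi}$ as fibered over handlebodies with monodromy $\Phi$, in the sense of Definition \ref{def:fibdisk}.

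\textbf{Step 3: $V'$ is a homotopy $4$-ball, and $D_{\Phi}$ is homotopy ribbon.} This is the main obstacle.
\begin{itemize}
\item \emph{Surjectivity on $\pi_1$.} The inclusion $\hat F\hookrightarrow H$ is surjective on $\pi_{1}$, so $\pi_{1}(S^{3}_{0}(K))\to\pi_{1}(M)$ is surjective. Both groups are extensions of $\mathbb{Z}$ (the circle direction) by the fiber groups, and the map respects these extensions. The map $\pi_{1}(S^{3}\setminus K)\to\pi_{1}(S^{3}_{0}(K))$ is also surjective. Composing, $\pi_{1}(S^{3}\setminus K)\to\pi_{1}(V'\setminus D_{\Phi})\cong\pi_{1}(M)$ is surjective, which is the homotopy ribbon condition.
\item \emph{Simple connectivity.} By van Kampen, $\pi_{1}(V')$ is $\pi_{1}(M)$ modulo the normal closure of the meridian $\partial D^{2}\times\{pt\}$. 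It is therefore a quotient of $\pi_{1}(S^{3}\setminus K)/\langle\langle\mu_{K}\rangle\rangle=1$, so $V'$ is simply connected.
\item \emph{Homology.} Compute $\chi(V')=\chi(M)+\chi(D^{2}\times D^{2})-\chi(D^{2}\times S^{1})=0+1-0=1$. Lefschetz duality gives $H_{3}(V')\cong H^{1}(V',\partial V')$, which vanishes by the long exact sequence since $H^{1}(V')=0$ and $\partial V'=S^{3}$. Also $H_{4}(V')=0$ because $\partial V'\neq\emptyset$. Hence $H_{2}(V')=0$ (it is free since $H_{1}=0$), so $V'$ is acyclic and simply connected, hence contractible with boundary $S^{3}$.
\end{itemize}
Together these show $V'$ is a homotopy $4$-ball and $D_{\Phi}$ is a fibered homotopy ribbon disk with $V'\setminus\eta(D_{\Phi})$ fibering over handlebodies with monodromy $\Phi$, as claimed.

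The delicate point I expect is the surjectivity in the first item of Step 3. It must be verified as a map of extensions of $\mathbb{Z}$: using the section given by the circle direction (the $S^{1}$ factor over a point of the capped-off disk, where $\hat\vphi$ is the identity), the map is onto on the fiber groups and on the $\mathbb{Z}$ quotients.
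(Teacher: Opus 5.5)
Your argument is correct in substance and follows the standard route. The paper gives no proof of its own and only cites \cite{CG}. What you do is exactly the Casson--Gordon construction from the ``if'' half of Theorem \ref{thm:fibhomrib}, applied to the extension supplied by the ``only if'' half: form $M=H\times_{\Phi}S^{1}$, attach a $2$-handle along the circle $\{0\}\times S^{1}$ in the surgery solid torus, and take the cocore. Your $\pi_{1}$ and Euler characteristic checks are the right ones.

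There is, however, a coordinate slip in Step 2 that you must fix. With your gluing $D^{2}\times\partial D^{2}\to D^{2}\times S^{1}\subset\partial M$, the disk $\{0\}\times D^{2}$ is the \emph{core} of the $2$-handle. Its boundary is attached to the dual knot $\{0\}\times S^{1}\subset\partial M$, which ends up in the interior of $V'$. So this disk is not properly embedded and does not bound $K$. You want the cocore $D_{\Phi}=D^{2}\times\{0\}$, whose boundary $\partial D^{2}\times\{0\}$ is the core of the filling torus $\partial D^{2}\times D^{2}$, i.e.\ $K$.

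Likewise, the curves glued to meridians of $K$ (and the loop killed in $\pi_{1}(V')$) are $\{pt\}\times\partial D^{2}$. The curves $\partial D^{2}\times\{pt\}$ go to the longitude $\partial F$. You should also state that the circle direction at a point of $\partial F$ is a meridian, which is the usual open-book normalization of the fibration near $K$. Without it, the product framing could fill along $\mu+k\lambda$ and give $S^{3}_{1/k}(K)$ instead of $S^{3}$.

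Two smaller points:
\begin{itemize}
\item With the cocore, $V'\setminus\eta(D_{\Phi})$ is $M$ plus a collar $D^{2}\times\partial D^{2}\times I$, hence \emph{diffeomorphic} to $M$. A deformation retraction alone would not give fiberedness.
\item $H_{2}(V')$ is torsion-free because its torsion equals that of $H^{3}(V')\cong H_{1}(V',\partial V')=0$, not directly because $H_{1}(V')=0$.
\end{itemize}
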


\begin{remark}
The disk $D_{\Phi}$ in Corollary \ref{cor:fibdisk} can actually be taken to be handle-ribbon in $V'$.
\end{remark}

If we consider the case where $K$ is a fibered homotopy ribbon knot in $B^{4}$, then Corollary \ref{cor:fibdisk} implies the existence of a homotopy $4$-ball $V'$ such that $K$ possess a fibered homotopy ribbon disk in $V'$. This leads to two natural questions, noted by Miller in \cite{Miller}: 

\begin{question}[Question $1.3$ of \cite{Miller}]\label{q:diffeoB4}
Is $V'$ diffeomorphic to $B^{4}$?
\end{question}

\begin{question}[Question $7.1$ of \cite{Miller}]\label{q:fibdiskB4}
Let $K$ be a fibered homotopy ribbon knot in $S^{3}$. Does $K$ bound a fibered homotopy ribbon disk in $B^{4}$?
\end{question}

We note that if the answer to either of these questions is ``no," then there exists an exotic $4$-ball.  \newline

To aid in the study of fibered homotopy ribbon knots and monodromy extensions, we recall the notion of \textit{R-links} and \textit{derivatives} (see \cite{GST,MeZ,MiZ} for details).

\begin{definition}
Let $L\subset S^{3}$ be an n-component link. We say $L$ is an \textit{R-link} if $0$-framed surgery on $L$ gives $\#^{n}(S^1\times S^2)$.
\end{definition}

We emphasize that it is crucial in the definition of an  R-link that the number of components is equal to the number of summands of $S^1\times S^2$. If $L$ is an $n$-component R-link, then $L$ gives rise to a closed $4$-manifold $X$ built from a single $4$-dimensional $0$-handle, $n$ $4$-dimensional $2$-handles, $n$ $4$-dimensional $3$-handles, and a single $4$-handle. As $X$ was built with no $1$-handles, $X$ is simply-connected. In addition, a Mayer--Vietoris argument shows that the $3$-handles homologically cancel the $2$-handles, leaving us with a homotopy $S^4$. By work of Freedman \cite{Fr}, we conclude that $X$ is \textit{homeomorphic} to $S^4$, but the two are not obviously diffeomorphic. \newline

Given an R-link $L$ in $S^3$, there are two types of operations we can perform on $L$ which do not change the diffeomorphism type of the resulting homotopy $4$-sphere $X$. The first move allows us to potentially change the isotopy class of a framed component of $L$ inside $S^3$. This is accomplished by treating our surgery on $L$ as a $4$-dimensional $2$-handle attachment and performing \textit{handleslides} on the components of $L$. This move, in turn, gives rise to the Generalized Property R Conjecture (Problem $1.82$ of \cite{kirby-web})):

\begin{conjecture}[Generalized Property R Conjecture]
Every $n$-component R-link is handleslide equivalent to an $n$-component $0$-framed unlink.
\end{conjecture}

\noindent The significance of the Generalized Property R Conjecture is that it implies any homotopy $4$-sphere associated to an R-link can be standardized without adding canceling handle-pairs. \newline

The second operation allows us to potentially change the number of components of $L$ inside $S^3$. If $\mathcal{U}\subset S^3$ is an $m$ component unlink, then $0$-surgery on $\mathcal{U}$ yields $\#^{m}(S^1\times S^2)$. If we are given an R-link $L\subset S^3$, then performing $0$-framed surgery on $L\sqcup\mathcal{U}$ gives us $\#^{n+m}(S^1\times S^2)$; hence, $L\sqcup\mathcal{U}$ is an R-link as well. Letting $X$ be the smooth homotopy $4$-sphere defined by our R-link $L$, and $X'$ the homotopy $4$-sphere defined by $L\sqcup\mathcal{U}$, there are $m$ canceling $2,3$-handle pairs coming from $\mathcal{U}$. This gives a diffeomorphism from $X'$ to $X$. This discussion leads us to our next definition.

\begin{definition}\label{def:stableequiv}
Two R-links $L$ and $L'$ in $S^3$ are \textit{stably equivalent} if there are two unlinks $\mathcal{U}$ and $\mathcal{U'}$ of unknotted components in $\Sigma$ such that $L\sqcup\mathcal{U}$ is handleslide equivalent to $L'\sqcup\mathcal{U'}$.
\end{definition}

R-links also help in the study of sliceness through the following relationship, made explicit in \cite{MiZ}.

\begin{lemma}[Lemma $3.1$ of \cite{MiZ}]
A knot $K\subset S^3$ is handle-ribbon in some homotopy $4$-ball if and only if $K$ is a component of some R-link $L$. The core of the $2$-handle attached to $K$ can be taken to serve as a handle-ribbon disk for $K$.
\end{lemma}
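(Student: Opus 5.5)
We must prove Lemma 3.1 of MiZ: $K$ is handle-ribbon in some homotopy $4$-ball if and only if $K$ is a component of some R-link $L$, and the core of the $2$-handle attached along $K$ serves as the handle-ribbon disk. The plan is to prove each direction by turning handle decompositions upside down, using Definition \ref{def:hanrib} and the homotopy-sphere discussion following the definition of R-links.

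\textbf{R-link to handle-ribbon disk.} Suppose $L = K\cup L_2\cup\dots\cup L_n$ is an R-link. Let $X$ be the associated homotopy $4$-sphere, built from a $0$-handle, $2$-handles along the components of $L$ with framing $0$, $n$ three-handles, and a $4$-handle. Attach the $2$-handle $h_K$ along $K$ last among the $2$-handles; its cocore is a disk. Rather than working in $X$ directly, I would define $V$ as the complement in $X$ of a neighborhood of $B^4\cup h_K$, that is, the complement of $X_0(K)$. Then $\partial V = S^3_0(K)$, which is not what we want, so instead I dualize as follows.

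Take $V = X\setminus \eta(B^4)$, where $B^4$ is the $0$-handle. Then $V$ is a homotopy $4$-ball with $\partial V = S^3$, and $K$ sits in that boundary. The core $D$ of $h_K$ is a properly embedded disk in $V$ with $\partial D = K$. Its exterior $V\setminus\eta(D)$ is obtained from $\partial V\times I$ together with the neighborhood of $K$ removed. Relative to $(S^3\setminus\eta(K))\times I$, this exterior consists of the $2$-handles along $L_2,\dots,L_n$, the $3$-handles, and the $4$-handle.

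Turning this decomposition upside down, viewed from the other end, the $4$-handle becomes a $0$-handle, the $3$-handles become $1$-handles, and the $2$-handles stay $2$-handles. The removed neighborhood of the core contributes the $1$-handle of the disk complement (equivalently, the dotted circle). There are no $3$-handles, so $D$ is handle-ribbon in $V$ by Definition \ref{def:hanrib}.

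I expect the main bookkeeping issue to be making precise that $E_D$, with the full handle structure of $X$ minus the $0$-handle and $h_K$, can be read upside down as $0$-, $1$- and $2$-handles only. Concretely, $E_D$ is $X$ minus the $4$-ball $B^4\cup h_K\cup(\text{neighborhood})$ viewed appropriately. This is the standard "dotted circle equals carved $2$-handle" correspondence (Section $6.2$ of \cite{GompfStip}).

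\textbf{Handle-ribbon disk to R-link.} Conversely, let $D\subset V$ be handle-ribbon, with $E_D$ built from $\partial$-side with only $0$-, $1$- and $2$-handles. Turn $E_D$ upside down, so that relative to $\partial E_D = S^3_0(K)$ it has only $2$-, $3$- and $4$-handles.

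Cap off by gluing $X_0(K)$ along $S^3_0(K)$: $V\cup_{S^3} B^4$, where $B^4\cup\eta(D)$ forms $X_0(K)$. The result is a closed homotopy sphere built from a $0$-handle, the $2$-handle along $K$, the $2$-handles from $E_D$, and the $3$- and $4$-handles.

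Since there are no $1$-handles, the $2$-handles are attached along a framed link $L\ni K$ in $S^3$. The union of the $0$- and $2$-handles has boundary obtained by attaching only $3$- and $4$-handles to get a closed manifold. Hence that boundary is $\#^m(S^1\times S^2)$, with $m$ the number of $3$-handles, using Laudenbach--Poénaru / Trace uniqueness for $3$-handle attachment.

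Homology then forces the number of $2$-handles to equal $m$. For the framings, I would note that the $2$-handle along $K$ is $0$-framed, and that $H_2=0$ forces $0$-framings (after slides if necessary; I would argue that $\#^m S^1\times S^2$ surgery forces a trivial linking matrix, and the framing of $K$ is $0$ by construction). So $L$ is an R-link containing $K$.
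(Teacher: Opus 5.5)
The paper gives no proof of this lemma; it is quoted from \cite{MiZ}. Your argument is the standard one: turn the handle decomposition of the R-link homotopy sphere upside down for one direction, and for the other glue $X_0(K)$ onto the upside-down disk exterior and read off the attaching link. Your converse direction is correct. Two simplifications there. First, Laudenbach--Po\'enaru is not needed: the $3$- and $4$-handles turned upside down form $\natural^m S^1\times B^3$, so the union of the $0$- and $2$-handles has boundary $\#^m S^1\times S^2$ outright. Second, no slides are needed for the framings: with $m$ two-handles, $\operatorname{coker}(Q)\cong H_1(\#^m S^1\times S^2)\cong\Z^m$ forces the entire $m\times m$ linking matrix $Q$, diagonal included, to vanish.

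The forward direction reaches the right conclusion, but the bookkeeping you wrote is wrong.

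The set $\eta(B^4)\cup\eta(D)$ is a copy of $B^4\cup h_K=X_0(K)$: a slightly enlarged $0$-handle with a thinner copy of $h_K$. Hence $E_D\cong X\setminus\mathrm{int}\,X_0(K)$. This is exactly the manifold you discarded in your first attempt. It is the wrong candidate for $V$, but it is precisely the exterior, whose boundary must be $S^3_0(K)$.

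So $B^4\cup h_K\cup(\text{neighborhood})$ is not a $4$-ball. The relative decomposition must be taken relative to $S^3_0(K)\times I$, not $(S^3\setminus\eta(K))\times I$. The piece $h_K\setminus\eta(\mathrm{core})\cong D^2\times S^1\times I$, which your description leaves out, is the collar on the surgery solid torus.

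Relative to $S^3_0(K)\times I$, $E_D$ consists of the $2$-handles along $L_2,\dots,L_n$ (which lie in $S^3\setminus\eta(K)$), the $n$ three-handles and the $4$-handle. Upside down it therefore has one $0$-handle, $n$ one-handles, $n-1$ two-handles and no $3$-handles.

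In particular, carving $D$ does not contribute a $1$-handle. The core of $h_K$ is the cocore of the dual handle $h_K^{*}$, so removing it simply deletes $h_K^{*}$. As a check, $\chi(E_D)=\chi(V)-\chi(D)=0$, whereas your extra $1$-handle would give $-1$.

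With this correction the proof is complete. The mirroring coming from $\partial(X\setminus\eta(B^4))=-\partial B^4$ is harmless, since both R-links and handle-ribbon disks are preserved under mirroring.
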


Another notion of importance to us will be \textit{derivative links}. 

\begin{definition}\label{def: derivative}
 For a knot $K\subset S^3$ and an oriented genus $g$ Seifert surface $F$ for $K$, a \textit{derivative} for $K$ in $F$ is a $g$-component link $L\subset F$ such that $F\setminus L$ is a connected planar surface, and $\text{lk}(L_{i},L_{j}^{+}) = 0$ for all $1\leq i,j,\leq g$, where $L_{j}^{+}$ is a parallel push-off of $L_{j}$ in the positive direction defined by $F$. If in addition to $L$ being a derivative to $K$, $L$ is also an R-link, we call $L$ an \textit{R-link derivative} of $K$.
\end{definition}

Using derivative links, we can give a characterization of monodromy extensions in the sense of Casson and Gordon \cite{CG}. Let $K$ be a knot in $S^3$ with Seifert surface $F$ and a derivative link $L$ for $F$. Use $L$ to define an abstract handlebody $H$ as follows:

Performing $0$-surgery on $K$, capping off $F$ to obtain a closed orientable surface $\hat{F}$. Carrying $L$ into the $0$-surgery. Define an abstract handlebody $H$ with boundary precisely $\hat{F}$, by attaching $3$-dimensional $2$-handles along $L$ and capping off with a $3$-handle. $H$ is a handlebody since $L$ cuts $F$ into a planar surface and has zero pairwise algebraic linking with respect to the surface framing. 

\begin{definition}
Let $K$ be a fibered knot with fiber $F$ whose closed monodromy $\hat{\vphi}$ extends over a handlebody, and let $L\subset F$ be a derivative of $F$.\ $L$ is a \textit{Casson--Gordon derivative} (or \textit{CG-derivative}) if the closed monodromy $\hat{\vphi}$ admits an extension over the handlebody $H$ defined by $L$.
\end{definition}

To highlight the connection between R-links and CG-derivatives, we recall Theorem $3.3$ of \cite{MiZ} (alternatively, Theorem $1.5$ of \cite{MeZ}).

\begin{theorem}[Theorem $3.3$ of \cite{MiZ}]\label{thm:RlinkCGderiv}
If $K\cup J\subset S^{3}$ is a two-component R-link such that $K$ and $J$ are knots and $K$ is fibered, then $K\cup J$ is stably equivalent to a link $K\cup L^{+}$, where $L^{+}$ is a CG-derivative of $K$. In this case, the closed monodromy of $K$ extends over the handlebody determined by $L^{+}$. 
\end{theorem}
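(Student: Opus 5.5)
The plan is to reduce the statement to a monodromy-extension problem for the fibered $3$-manifold $Y=S^3_0(K)$ and then build the derivative $L^{+}$ inside a fiber.

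\textbf{Step 1: move $J$ into a fiber.} First perform $0$-surgery on $K$ alone. This gives the closed fibered manifold $Y$, with fiber $\hat F$ and closed monodromy $\hat\vphi$, and $J$ becomes a knot in $Y$. By hypothesis, $0$-surgery on $J\subset Y$ yields $\#^2(S^1\times S^2)$, so the surgered manifold has $b_1=2>b_1(Y)=1$. In this situation I would invoke Gabai's sutured manifold results, the same input used for Property R. The expected output is that $J$ is isotopic in $Y$ into a fiber $\hat F$, with surgery slope equal to the fiber framing. An isotopy of $J$ across the surgery solid torus of $K$ is realized in $S^3$ by handleslides of $J$ over $K$. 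Hence, up to handleslide equivalence, $J$ lies on a Seifert surface $F$ for $K$ with surface framing $0$. I expect this to be the main obstacle. The difficulty is making the sutured-manifold argument produce an honest isotopy into a fiber with the correct framing, rather than merely a taut foliation or norm statement. I would model it on Gabai's Property R proof applied to $Y\setminus\eta(J)$, using the fibration of $Y$.

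\textbf{Step 2: add unknots to complete a derivative.} A single curve $J\subset F$ does not cut $F$ into a planar surface when the genus $g$ exceeds $1$. I would therefore stabilize by adding $g-1$ split unknotted $0$-framed components, which is permitted by Definition \ref{def:stableequiv}. Each one is isotoped, again via slides over $K$ and over $J$, to a curve on $F$. The goal is a $g$-component link $L^{+}\subset F$ whose complement in $F$ is connected and planar and which has zero pairwise surface-framed linking. The surgered $3$-manifold is $\#^{g+1}(S^1\times S^2)$. Its $H_1$ computation forces the Seifert form restricted to the span of these curves to vanish, and this is exactly the condition $\text{lk}(L_i,L_j^{+})=0$.

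\textbf{Step 3: extend the monodromy over $H$.} Finally I would show that $\hat\vphi$ extends over the handlebody $H$ obtained by attaching $3$-dimensional $2$-handles to $\hat F$ along $L^{+}$. Surgery on $L^{+}$ along fiber framings inside $Y=\hat F\times_{\hat\vphi}S^1$ amounts to cutting $Y$ open along a fiber and gluing in the compression body determined by $L^{+}$ on one side. The result is $\#^{g+1}(S^1\times S^2)$. By Waldhausen's uniqueness of Heegaard splittings of $\#(S^1\times S^2)$, this forces $\hat\vphi(L^{+})$ to bound disks in $H$. Equivalently, $\hat\vphi$ extends to a diffeomorphism $\Phi$ of $H$, so $L^{+}$ is a CG-derivative and $K\cup J$ is stably equivalent to $K\cup L^{+}$. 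As a consistency check, Theorem \ref{thm:fibhomrib} already guarantees that some handlebody extension exists, because $K$ is handle-ribbon by Lemma $3.1$ of \cite{MiZ}.
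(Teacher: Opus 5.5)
The paper does not prove this statement; it quotes it from \cite{MiZ} (see also \cite{MeZ}), so I am judging your outline on its own merits.

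Two of your three steps are sound. Step 1 is a known result, and you can quote it rather than rederive it. It is the input used in \cite{GST}: a Gabai norm argument moves $J$ off a fiber of $S^3_0(K)$, and a Scharlemann--Thompson result on surgery on knots in $\hat F\times I$ then puts $J$ in a fiber with surgery slope equal to the fiber framing. Step 3 is also correct, but one detail of your picture needs fixing. Surgering $\hat F\times I$ along a derivative $L^{+}$ with fiber framing produces a punctured handlebody on \emph{each} side. So the surgered manifold is $(H\cup_{\hat\vphi}H)\#(S^1\times S^2)$. Kneser--Milnor then gives $H\cup_{\hat\vphi}H\cong\#^{g}(S^1\times S^2)$, and Waldhausen's theorem gives the extension of $\hat\vphi$ over $H$.

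The gap is Step 2, and this step carries the whole theorem. Sliding the new split unknots over $K$ and $J$ is the same as isotoping them in $M_J:=S^3_{0,0}(K\cup J)\cong\#^{2}(S^1\times S^2)$. So you need curves $c_2,\dots,c_g\subset F\setminus J$ with two properties:
\begin{enumerate}
\item $J\cup c_2\cup\dots\cup c_g$ cuts $F$ into a connected planar surface;
\item the $c_i$, with their surface framings, form a $0$-framed unlink in $M_J$, or at least a link handleslide equivalent to one.
\end{enumerate}
Your proposal gives no reason such curves exist. The $H_1$ computation only checks the linking condition once the curves are already on $F$ and the surgery is already known to be $\#^{g+1}(S^1\times S^2)$; it cannot produce the curves. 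By your own Step 3, producing these curves already produces the handlebody and the monodromy extension. So this is not a routine stabilization; it contains exactly what the theorem asserts.

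Holding $J$ fixed may also be too rigid. Nothing in Step 1 guarantees that $J$ is non-separating on $F$, and a separating curve is never a component of a derivative. In general, then, $J$ itself may have to be slid over the added components. Closing the gap needs an actual geometric argument about $M_J$. For instance, you could exploit the fact that cutting $M_J$ along a fiber leaves the two compression bodies determined by $J$, glued along their inner boundaries. A homological count cannot replace such an argument.
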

Using Theorem \ref{thm:RlinkCGderiv} and a method of Nakamura in \cite{Nak}, we will show that $K_{G}$ bounds a fibered, handle-ribbon disk in $B^{4}$.
\subsection{Constructing a fibered handle-ribbon disk for $K_{G}$}
We now show that the slice disk $D_{K_{G}}$ for $K_{G}$ obtained from Corollary \ref{cor:KGslice} is a fibered handle-ribbon disk.
\begin{namedtheorem}[Theorem \ref{thm:K18slice}]
The knot $K_{G}$ bounds a fibered handle-ribbon disk in $B^{4}$.
\end{namedtheorem}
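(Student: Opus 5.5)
The plan is to realize $D_{K_G}$ as the core of a $2$-handle in a handle decomposition of $X_{DG}$ built without $1$-handles, and then show that its exterior fibers. The first step is to describe $X_{DG}$ so that $K_G$ appears as a component of an R-link. Turned right side up, the piece $\overline{X_0(K_G)}$ contributes a $0$-handle together with a $0$-framed $2$-handle along $K_G$. The remaining piece $E_{D_{K_B}}$ is a ribbon-disk exterior with one $1$-handle and one $2$-handle, so turned upside down it contributes one $2$-handle, one $3$-handle and a $4$-handle.

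Tracking this through the RBG link of Figure \ref{fig:DGRBGlink}, I would draw $X_{DG}$ as $0$-surgery on a two-component link $K_G\cup J$ followed by $3$- and $4$-handles. Here $J$ is the attaching circle dual to the ribbon band of Figure \ref{fig:K31withband}, pushed through $\psi_{RBG}$ and the slide of $G$ over $R$. Since $X_{DG}$ is a homotopy sphere built with two $2$-handles and two $3$-handles, $K_G\cup J$ is an R-link. By Lemma $3.1$ of \cite{MiZ}, the core of the $2$-handle along $K_G$ is a handle-ribbon disk in $X_{DG}\setminus\eta(B^4)$. By Theorem \ref{thm:mainthm}, this complement is the standard $B^4$, and the disk it gives is $D_{K_G}$ of Corollary \ref{cor:KGslice}.

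Next I need to know that $K_G$ is fibered. Dunfield--Gong record this, and it can be checked in SnapPy/Sage by verifying that the Alexander polynomial is monic of degree $2g$ and that the knot Floer rank is $1$ in the top Alexander grading. I also need the genus $g$ and an explicit fiber $F$.

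With fiberedness in hand, Theorem \ref{thm:RlinkCGderiv} applies. It says $K_G\cup J$ is stably equivalent to $K_G\cup L^+$, where $L^+$ is a CG-derivative, and that the closed monodromy $\hat\vphi$ extends over the handlebody $H$ determined by $L^+$. Following Nakamura's method \cite{Nak}, I would realize this stable equivalence explicitly by Kirby calculus. That is, I would add unknotted $0$-framed components and slide $J$ and those components until they become curves on $F$ forming a derivative. Then $X_{DG}\setminus(\eta(B^4)\cup \eta(D_{K_G}))$ should be identified with the mapping torus $H\times_\Phi S^1$. The reason is that the $2$-handles along $L^+$, together with the $3$-handles, fill the $0$-surgery fibration $\hat F\times_{\hat\vphi}S^1$ fiberwise by $H$. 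This is the construction in Casson--Gordon's proof of Corollary \ref{cor:fibdisk}, now carried out inside a manifold we already know is standard. So $D_{K_G}$ is fibered, and because no $3$-handles appear in the disk exterior, it is also handle-ribbon.

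The main obstacle is the middle step: actually producing, by explicit handleslides, curves $L^+$ on the fiber $F$ of an $18$-crossing knot that form a derivative stably equivalent to $K_G\cup J$. I would also need to check carefully that the resulting fibration matches the disk exterior inside the \emph{standardized} $X_{DG}$, so that the ambient ball is $B^4$ and not merely some homotopy ball. My first attempt would be to compute the genus, find $F$ by a SnapPy-assisted Seifert algorithm, guess derivative curves from the ribbon band of $K_B$ transported through $\psi_{RBG}$, and then verify the stable equivalence diagrammatically.
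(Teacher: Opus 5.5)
Your proposal follows the paper's proof essentially step for step. You turn the decomposition of Lemma \ref{lem:obtainKdiagram} upside down to get a two-component R-link $K_G\cup J$; this is the paper's $K_G\cup U$, with $U$ the circle dual to the ribbon band pulled back through $\psi_{RBG}$. You then use Theorem \ref{thm:mainthm} to see the ambient ball is $B^4$, check fiberedness with knot Floer homology in SnapPy, and invoke Theorem \ref{thm:RlinkCGderiv} to identify the disk exterior with $H\times_{\Phi}S^1$.

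The explicit Kirby-calculus realization of the stable equivalence, which you flag as the main obstacle, is not needed. The paper uses Theorem \ref{thm:RlinkCGderiv} purely as an existence statement, and stable equivalence preserves the diffeomorphism type of the homotopy sphere, so the ball stays standard.

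One small miscount: $E_{D_{K_B}}$ has two $1$-handles, since the ribbon disk has two minima, so turned upside down it contributes two $3$-handles rather than one. You use the correct count later when you say $X_{DG}$ has two $3$-handles.
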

\begin{proof}
    \label{fig:upsidedowndiskextwithRBG}
    \label{fig:upsidedowndiskextwithRBG}
To show that $K_{G}$ bounds a fibered handle-ribbon disk in $B^{4}$, we use the method outlined in Section $4.1$ of \cite{Nak} to turn the handle decomposition obtained from Lemma \ref{lem:obtainKdiagram} upside-down. Representing $K_{B}$ as the banded unlink in Figure \ref{fig:K31withband}, we obtain the ribbon disk $D_{K_{B}}$. We wish to present $\overline{E_{D_{K_{B}}}}$ as a relative handle decomposition. First, add a bracketed $\langle0\rangle$-framing to $K_{B}$. Next, place in a small $0$-framed unknot $U$ that links with our ribbon band in the sense of \cite{Thomp} (colored black in Figure \ref{fig:upsidedowndiskextwithRBG}). To finish, cap off the resulting manifold with two $4$-dimensional $3$-handles and a $4$-handle. The result is a copy of $\overline{E_{D_{K_{B}}}}$, expressed as a relative Kirby diagram on the left side of Figure \ref{fig:upsidedowndiskextwithRBG}.

\begin{figure}[h!]
    \centering
    \includegraphics[scale = 0.69]{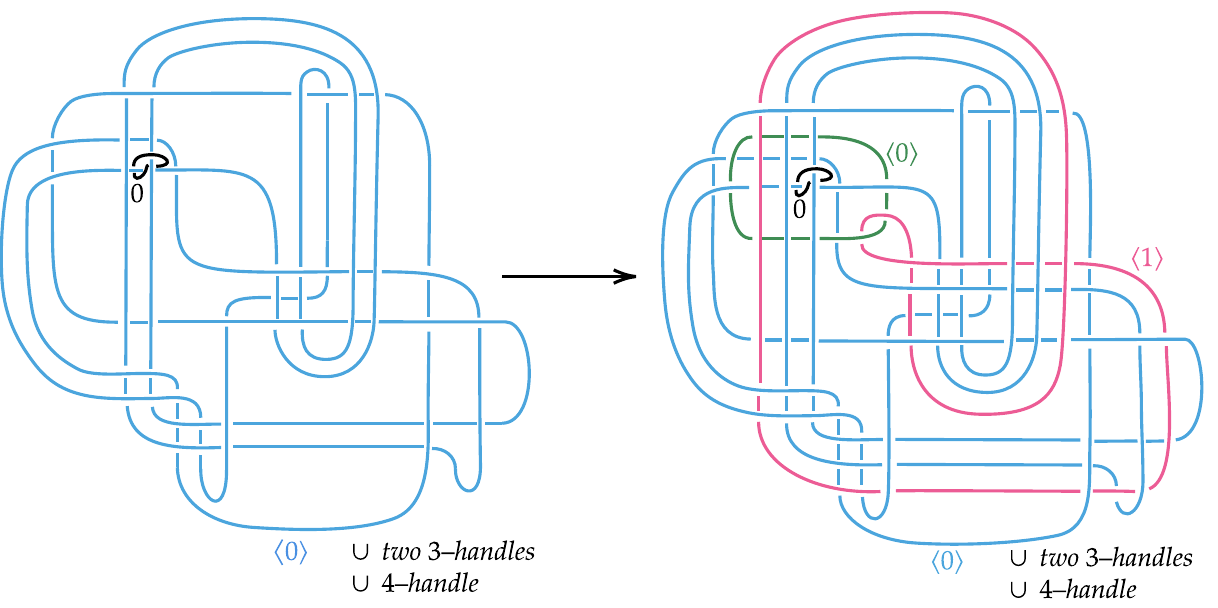}
    \caption{(Left) A relative Kirby diagram for $\overline{E_{D_{K_{B}}}}$. (Right) A different presentation $E_{D_{K_{B}}}$ as a relative Kirby diagram of a with the bracketed-framed link $R\cup K_{B}\cup U\cup G$.
crossings with R.}
    \label{fig:upsidedowndiskextwithRBG}
\end{figure}

To obtain a Kirby diagram of the manifold $X = X_{0}(K_{G})\cup_{\psi_{RBG}}\overline{E_{D_{K_{B}}}}$, we pull the relative handle diagram for $\overline{E_{D_{K_{B}}}}$ back along the homeomorphism $\psi_{RBG}$ associated to the RBG link $R\cup B\cup G$ from Figure \ref{fig:DGRBGlink}. In doing so, we make sure to keep track of the image of the sublink $G\cup U$ under this homeomorphism. First, insert a disjoint $\langle 0,1\rangle$-framed Hopf link $G\cup R$, with components colored green and red, respectively. By sliding $K_{B}$ over $G$, we change crossings between $K_{B}$ and $R$ until we obtain the link $R\cup K_{B}\cup U\cup G$ pictured on the right side of Figure \ref{fig:upsidedowndiskextwithRBG}. We can obtain the two-component link $L = K_{G}\cup \psi_{RBG}(U)$ depicted on the left side of Figure \ref{fig:exteriorwithrlink} by sliding $K_{B}$ over $R$ using the handlesides given in Figure \ref{fig:handleslidingK31inRBGlink}, followed by deleting $R\cup B$ after sliding $K_{G}\cup \psi_{RBG}(U)$ over $B$ to become disjoint from $R$. In a slight abuse of notation, we rename $\psi_{RBG}(U)$ as $U$.

\begin{figure}[h!]
    \centering
    \includegraphics[scale = 0.68]{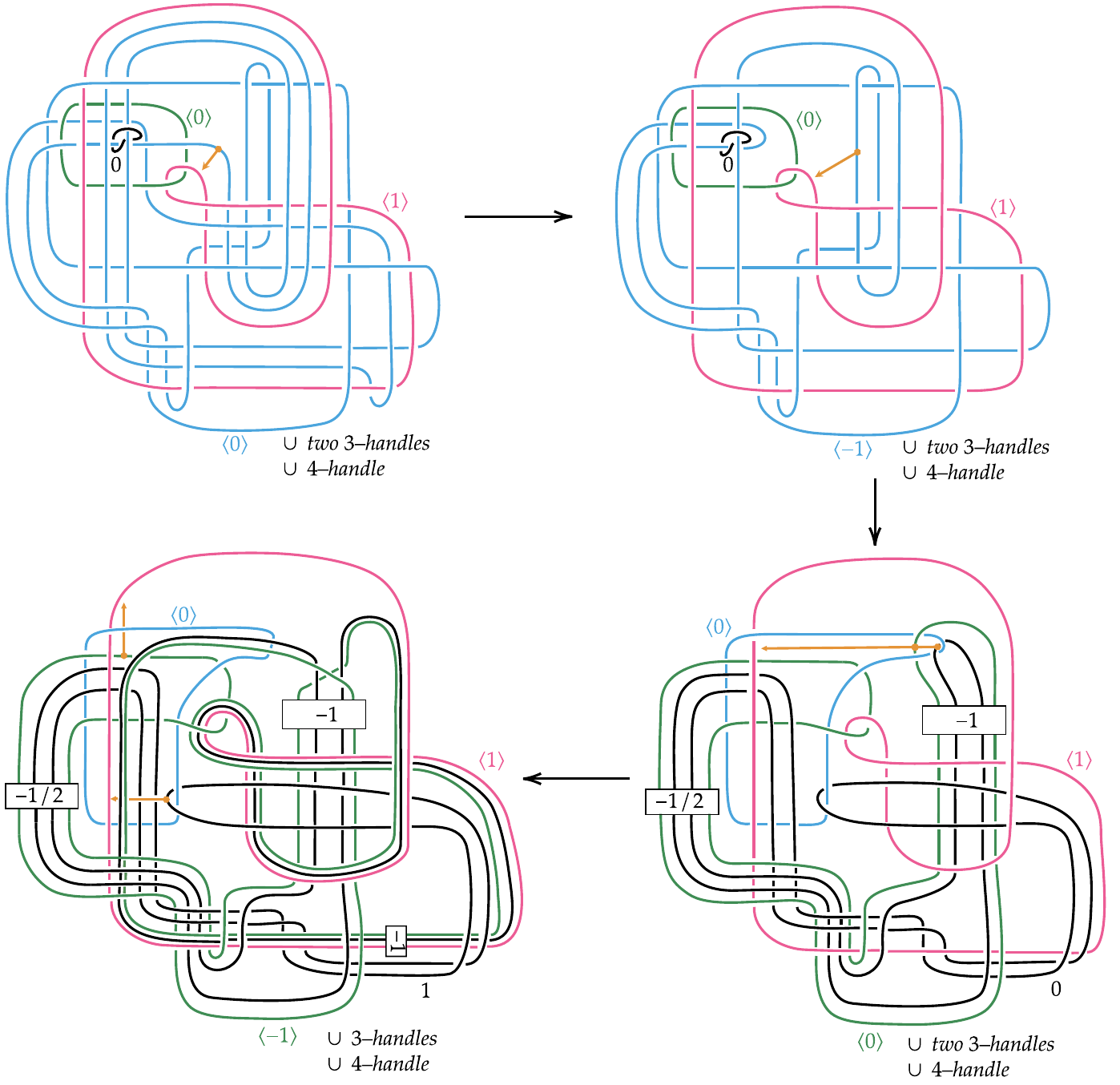}
    \caption{(Top Row) The link $R\cup K_{B}\cup U\cup G$ is transformed into the link $R\cup B\cup U\cup G$ using the handleslides depicted by the orange arrows. (Bottom Row) The link $R\cup B\cup U\cup G$ is transformed into the link $K_{G}\cup U$ pictured on the left side of Figure \ref{fig:exteriorwithrlink} by performing the handleslides of $G$ and $U$ over $R$ depicted by the orange arrows.}
    \label{fig:handleslidingK31inRBGlink}
\end{figure}

\begin{figure}[h]
    \centering
    \includegraphics[scale = 0.7]{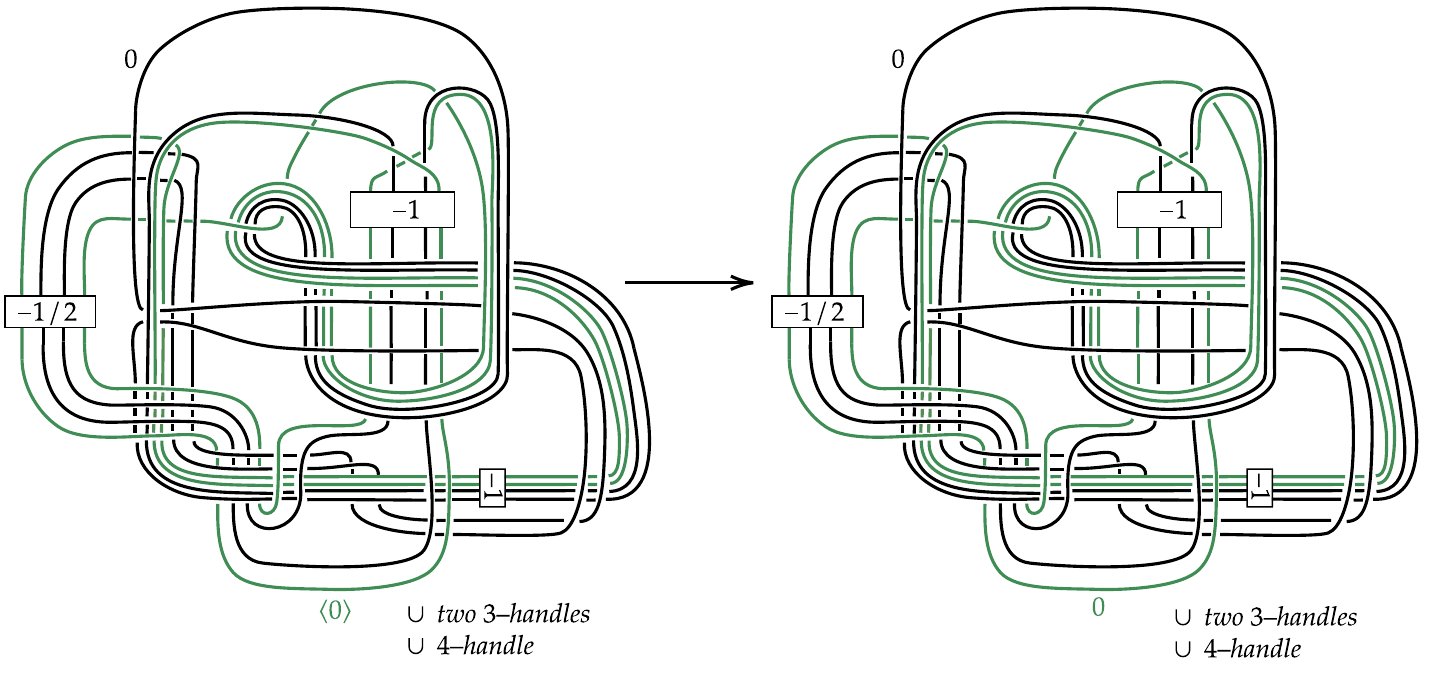}
    \caption{(Left) A relative Kirby diagram for $\overline{E_{D_{K_{B}}}}$ expressed as the link $K_{G}\cup U$ obtained from the handleslides in Figure \ref{fig:handleslidingK31inRBGlink}. (Right) The result of plugging $X_{0}(K_{G})$ into the diagram of $\overline{E_{D_{K_{B}}}}$ to obtain a two-component R-link.}
    \label{fig:exteriorwithrlink}
\end{figure}

Following \cite{Nak}, we plug the trace $X_{0}(K_{G})$ into our diagram of $\overline{E_{D_{K_{B}}}}$ by unbracketing $K_{G}$ to obtain a Kirby diagram for $X$ given on the right side of Figure \ref{fig:exteriorwithrlink}. Since attaching $0$-framed $2$-handles along $L$ results in a $3$-manifold that can be capped off by a genus two $4$-dimensional $1$-handlebody, $L$ must be a two-component R-link. As $X$ was obtained by turning the handle decomposition of $X_{DG}$ obtained from Lemma \ref{lem:obtainKdiagram} upside-down, Theorem \ref{thm:mainthm} guarantees that $X$ is diffeomorphic to $S^{4}$. Thus, there is a natural trace embedding of $X_{0}(K_{G})$ into $S^4$ determined by the R-link $L$. From Theorem \ref{thm:RlinkCGderiv}, the core of the $2$-handle attached to $K_{G}$ constitutes a handle-ribbon disk $D_{K_{G}}$ in $B^{4}$. We note that $D_{K_{G}}$ is the same disk obtained from Corollary \ref{cor:KGslice}. Using the \texttt{knot\_floer\_homology()} package inside SnapPy \cite{SnapPy}, one can verify that $K_{G}$ is a genus five fibered knot. Letting $\hat{F}$ denote the capped off fiber of $K_{G}$ in $S_{0}^{3}(K_{G})$, and $\hat{\vphi}:\hat{F}\to\hat{F}$ denote the closed monodromy of $K_{G}$, we know from Theorem \ref{thm:fibhomrib} that $\hat{\vphi}$ must extend over a handlebody. We use the R-link $L$ to describe a monodromy extension. By Theorem \ref{thm:RlinkCGderiv}, $L$ is stably equivalent to a six-component link $L' = K_{G}\cup L^{+}$, where $L^{+}$ is a CG-derivative of $K$. Since $L'$ is stably equivalent to $L$, $L'$ defines a homotopy $4$-sphere which is diffeomorphic to $S^4$. In addition, attaching $2$-handles along $L^{+}$ with the framing induced by the fiber of $K_{G}$ in $S_{0}^{3}(K_{G})$ and capping off with a genus six $4$-dimensional $1$-handlebody will define the same slice disk exterior as the diagram on the left side of Figure \ref{fig:exteriorwithrlink}. Since $L^{+}$ was a CG-derivative of $K$, Theorem \ref{thm:RlinkCGderiv} gives us an abstract handlebody $H$ with $\partial H = \hat{F}$ defined by $L^{+}$ over which the closed monodromy $\hat{\vphi}$ must extend. We call this monodromy extension $\Phi$. Using $\Phi$, we conclude that $B^{4}\setminus\eta(D_{K_{G}})$ must be the handlebody bundle $H\times_{\Phi}S^{1}$, the desired result.
\end{proof}

\begin{remark}
    We can see that the above theorem answers Questions \ref{q:diffeoB4} and \ref{q:fibdiskB4} in the affirmative for the monodromy extension $\Phi$ defined by the CG-derivative $L^{+}$.
\end{remark}

Although we have exhibited a fibered handle-ribbon disk $D_{K_{G}}$ for $K_{G}$ through a trace embedding, we were unable to show that $D_{K_{G}}$ is ribbon. This still allows for the possibility for $K_{G}$ to serve as a potential counterexample to the Slice-Ribbon Conjecture. We end with the following two questions.

\begin{question}
    Is the fibered handle-ribbon disk $D_{K_{G}}$ of $K_{G}$ from Theorem \ref{thm:K18slice} a ribbon disk for $K_{G}$?
\end{question}
If not, we can more generally ask: 
\begin{question}
    Is $K_{G}$ ribbon?
\end{question}

\bibliographystyle{alpha}
\bibliography{references.bib}
\end{document}